\begin{document}

\theoremstyle{plain}

\newtheorem{thm}{Theorem}[section]
\newtheorem{lem}[thm]{Lemma}
\newtheorem{conj}[thm]{Conjecture}
\newtheorem{pro}[thm]{Proposition}
\newtheorem{hyp}[thm]{Hypothesis}

\newtheorem{cor}[thm]{Corollary}
\newtheorem{que}[thm]{Question}
\newtheorem{rem}[thm]{Remark}
\newtheorem{defi}[thm]{Definition}

\newtheorem*{thmA}{THEOREM A}
\newtheorem*{thmB}{THEOREM B}
\newtheorem*{thmC}{THEOREM C}
\newtheorem*{conjC}{CONJECTURE C}
\newtheorem*{conjD}{CONJECTURE D}
\newtheorem*{thmD}{THEOREM D}
\newtheorem*{thmE}{THEOREM E}
\newtheorem*{conjF}{CONJECTURE F}

\newtheorem*{thmAcl}{Main Theorem$^{*}$}
\newtheorem*{thmBcl}{Theorem B$^{*}$}

\numberwithin{equation}{section}

\newcommand{\Maxn}{\operatorname{Max_{\textbf{N}}}}
\newcommand{\Syl}{\operatorname{Syl}}
\newcommand{\dl}{\operatorname{\mathfrak{d}}}
\newcommand{\Con}{\operatorname{Con}}
\newcommand{\cl}{\operatorname{cl}}
\newcommand{\Stab}{\operatorname{Stab}}
\newcommand{\Aut}{\operatorname{Aut}}
\newcommand{\Ker}{\operatorname{Ker}}
\newcommand{\IBr}{\operatorname{IBr}}
\newcommand{\Irr}{\operatorname{Irr}}
\newcommand{\SL}{\operatorname{SL}}
\newcommand{\FF}{\mathbb{F}}
\newcommand{\NN}{\mathbb{N}}
\newcommand{\N}{\mathbf{N}}
\newcommand{\C}{\mathbf{C}}
\newcommand{\OO}{\mathbf{O}}
\newcommand{\F}{\mathbf{F}}

\renewcommand{\labelenumi}{\upshape (\roman{enumi})}

\newcommand{\GL}{\operatorname{GL}}
\newcommand{\Sp}{\operatorname{Sp}}
\newcommand{\PGL}{\operatorname{PGL}}
\newcommand{\PSL}{\operatorname{PSL}}
\newcommand{\SU}{\operatorname{SU}}
\newcommand{\PSU}{\operatorname{PSU}}
\newcommand{\PSp}{\operatorname{PSp}}
\newcommand{\SO}{\operatorname{SO}}
\newcommand{\X}{\mathcal{X}}
\def\Inndiag{\mathop{{\rm Inndiag}}}

\providecommand{\V}{\mathrm{V}}
\providecommand{\E}{\mathrm{E}}
\providecommand{\ir}{\mathrm{Irr_{rv}}}
\providecommand{\Irrr}{\mathrm{Irr_{rv}}}
\providecommand{\re}{\mathrm{Re}}

\def\irrp#1{{\rm Irr}_{p'}(#1)}

\def\Z{{\mathbb Z}}
\def\C{{\mathbb C}}
\def\Q{{\mathbb Q}}
\def\irr#1{{\rm Irr}(#1)}
\def\ibr#1{{\rm IBr}(#1)}
\def\irrv#1{{\rm Irr}_{\rm rv}(#1)}

\def \c#1{{\cal #1}}
\def\cent#1#2{{\bf C}_{#1}(#2)}
\def\syl#1#2{{\rm Syl}_#1(#2)}
\def\nor{\unlhd}
\def\oh#1#2{{\bf O}_{#1}(#2)}
\def\Oh#1#2{{\bf O}^{#1}(#2)}
\def\zent#1{{\bf Z}(#1)}
\def\det#1{{\rm det}(#1)}
\def\ker#1{{\rm ker}(#1)}
\def\norm#1#2{{\bf N}_{#1}(#2)}
\def\alt#1{{\rm Alt}(#1)}
\def\iitem#1{\goodbreak\par\noindent{\bf #1}}
   \def \mod#1{\, {\rm mod} \, #1 \, }
\def\sbs{\subseteq}

\def\gc{{\bf GC}}
\def\ngc{{non-{\bf GC}}}
\def\ngcs{{non-{\bf GC}$^*$}}
\newcommand{\notd}{{\!\not{|}}}
\newcommand{\Out}{{\mathrm {Out}}}
\newcommand{\Mult}{{\mathrm {Mult}}}
\newcommand{\Inn}{{\mathrm {Inn}}}
\newcommand{\IBR}{{\mathrm {IBr}}}
\newcommand{\IBRL}{{\mathrm {IBr}}_{\ell}}
\newcommand{\IBRP}{{\mathrm {IBr}}_{p}}
\newcommand{\ord}{{\mathrm {ord}}}
\def\id{\mathop{\mathrm{ id}}\nolimits}
\renewcommand{\Im}{{\mathrm {Im}}}
\newcommand{\Ind}{{\mathrm {Ind}}}
\newcommand{\diag}{{\mathrm {diag}}}
\newcommand{\soc}{{\mathrm {soc}}}
\newcommand{\End}{{\mathrm {End}}}

\newcommand{\sol}{{\mathrm {sol}}}
\newcommand{\Hom}{{\mathrm {Hom}}}
\newcommand{\Mor}{{\mathrm {Mor}}}
\newcommand{\St}{{\sf {St}}}
\def\rank{\mathop{\mathrm{ rank}}\nolimits}
\newcommand{\Tr}{{\mathrm {Tr}}}
\newcommand{\tr}{{\mathrm {tr}}}
\newcommand{\Gal}{{\it Gal}}
\newcommand{\Spec}{{\mathrm {Spec}}}
\newcommand{\ad}{{\mathrm {ad}}}
\newcommand{\Sym}{{\mathrm {Sym}}}
\newcommand{\Char}{{\mathrm {char}}}
\newcommand{\pr}{{\mathrm {pr}}}
\newcommand{\rad}{{\mathrm {rad}}}
\newcommand{\abel}{{\mathrm {abel}}}
\newcommand{\codim}{{\mathrm {codim}}}
\newcommand{\ind}{{\mathrm {ind}}}
\newcommand{\Res}{{\mathrm {Res}}}
\newcommand{\Ann}{{\mathrm {Ann}}}
\newcommand{\Ext}{{\mathrm {Ext}}}
\newcommand{\Alt}{{\mathrm {Alt}}}
\newcommand{\AAA}{{\sf A}}
\newcommand{\SSS}{{\sf S}}
\newcommand{\CC}{{\mathbb C}}
\newcommand{\CB}{{\mathbf C}}
\newcommand{\RR}{{\mathbb R}}
\newcommand{\QQ}{{\mathbb Q}}
\newcommand{\ZZ}{{\mathbb Z}}
\newcommand{\KK}{{\mathbb K}}
\newcommand{\NB}{{\mathbf N}}
\newcommand{\ZB}{{\mathbf Z}}
\newcommand{\OB}{{\mathbf O}}
\newcommand{\EE}{{\mathbb E}}
\newcommand{\PP}{{\mathbb P}}
\newcommand{\GC}{{\mathcal G}}
\newcommand{\HC}{{\mathcal H}}
\newcommand{\AC}{{\mathcal A}}
\newcommand{\BC}{{\mathcal B}}
\newcommand{\GA}{{\mathfrak G}}
\newcommand{\SC}{{\mathcal S}}
\newcommand{\TC}{{\mathcal T}}
\newcommand{\DC}{{\mathcal D}}
\newcommand{\LC}{{\mathcal L}}
\newcommand{\RC}{{\mathcal R}}
\newcommand{\CL}{{\mathcal C}}
\newcommand{\EC}{{\mathcal E}}
\newcommand{\GCD}{\GC^{*}}
\newcommand{\TCD}{\TC^{*}}
\newcommand{\FD}{F^{*}}
\newcommand{\GD}{G^{*}}
\newcommand{\HD}{H^{*}}
\newcommand{\hG}{\hat{G}}
\newcommand{\hP}{\hat{P}}
\newcommand{\hQ}{\hat{Q}}
\newcommand{\hR}{\hat{R}}
\newcommand{\GCF}{\GC^{F}}
\newcommand{\TCF}{\TC^{F}}
\newcommand{\PCF}{\PC^{F}}
\newcommand{\GCDF}{(\GC^{*})^{F^{*}}}
\newcommand{\RGTT}{R^{\GC}_{\TC}(\theta)}
\newcommand{\RGTA}{R^{\GC}_{\TC}(1)}
\newcommand{\Om}{\Omega}
\newcommand{\eps}{\epsilon}
\newcommand{\varep}{\varepsilon}
\newcommand{\al}{\alpha}
\newcommand{\chis}{\chi_{s}}
\newcommand{\sigmad}{\sigma^{*}}
\newcommand{\PA}{\boldsymbol{\alpha}}
\newcommand{\gam}{\gamma}
\newcommand{\lam}{\lambda}
\newcommand{\la}{\langle}
\newcommand{\ra}{\rangle}
\newcommand{\hs}{\hat{s}}
\newcommand{\htt}{\hat{t}}
\newcommand{\sgn}{\mathsf{sgn}}
\newcommand{\SR}{^*R}
\newcommand{\tn}{\hspace{0.5mm}^{t}\hspace*{-0.2mm}}
\newcommand{\ta}{\hspace{0.5mm}^{2}\hspace*{-0.2mm}}
\newcommand{\tb}{\hspace{0.5mm}^{3}\hspace*{-0.2mm}}
\def\skipa{\vspace{-1.5mm} & \vspace{-1.5mm} & \vspace{-1.5mm}\\}
\newcommand{\tw}[1]{{}^#1\!}
\renewcommand{\mod}{\bmod \,}
\newcommand{\edit}[1]{{\color{red} #1}}

\newcommand{\carolinacomment}{\textcolor{cyan}}

\marginparsep-0.5cm

\renewcommand{\thefootnote}{\fnsymbol{footnote}}
\footnotesep6.5pt

\title{Principal Blocks, Irreducible Restriction, Fields and Degrees}

\author[R. Lyons]{Richard Lyons}
\address{Department of Mathematics, Rutgers University, Piscataway, NJ 08854, USA}
\email{lyons@math.rutgers.edu}

\author[J. M. Mart\'inez]{J. Miquel Mart\'inez}
\address{Departament de Matem\`atiques, Universitat de Val\`encia, 46100 Burjassot,
Val\`encia, Spain}
\email{josep.m.martinez@uv.es}

\author[G. Navarro]{Gabriel Navarro}
\address{Departament de Matem\`atiques, Universitat de Val\`encia, 46100 Burjassot,
Val\`encia, Spain}
\email{gabriel@uv.es}

\author[P. H. Tiep]{Pham Huu Tiep}
\address{Department of Mathematics, Rutgers University, Piscataway, NJ 08854, USA}
\email{tiep@math.rutgers.edu}
 
\thanks{The research of the second and third authors is supported by Grant PID2022-137612NB-I00
 funded by MCIN/AEI/ 10.13039/501100011033 and ERDF ``A way of making Europe."
 The fourth author gratefully acknowledges the support of the NSF (grant
 DMS-2200850) and the Joshua Barlaz Chair in Mathematics. Part of this work started while the third author visited Rutgers University.
He would like to thank the Department for the hospitality and support. The second author acknowledges grant CIDEIG/2022/29 from Generalitat Valenciana for funding a visit to the Universitat de Val\`encia where part of this work was done. }

\thanks{The authors thank J. Lynd for pointing out \cite{Gl}. The second author wishes to thank Mandi Schaeffer Fry for helpful discussions on the topic of this note.}
 
 \dedicatory{To I. M. Isaacs, in memoriam}
 
% \dedicatory{To I. M. Isaacs, in memoriam}
%
 
\keywords{}

\subjclass[2010]{Primary 20D20; Secondary 20C15}

\begin{abstract}
Several recent problems in the representation theory of finite groups require determining whether certain characters of almost simple groups belong to the principal block. Since the values of these characters are not yet known, we employ alternative group-theoretical techniques to address the ``going down" case. This approach enables us to reduce the block version of well-known results by the third and fourth authors to a question about almost simple groups. Moreover, this suggests a Galois analogue of the height-zero-equal-degree conjecture of Malle and Navarro, which we formulate. However, the ``going up" case of irreducible extensions of principal block characters remains unresolved.
\end{abstract}

\maketitle

\section{Introduction}  The resolution of several recent conjectures concerning the characters of principal blocks of finite groups depends on a better understanding of when the irreducible restriction of a character in the principal block remains in the principal block, and conversely, particularly in the case of almost simple groups (see, for instance, \cite{LWXZ}, \cite{NRS}, or \cite{NST}).
Somewhat surprisingly, the character values of almost simple groups are not sufficiently well understood, which means that determining whether a character lies in the principal block often requires alternative techniques.

Let $p$ be a fixed prime, let $G$ be a finite group, and denote by $\irr{B_0(G)}$ the irreducible
complex characters of $G$ in the $p$-principal block.
For example, a useful going down theorem due to T. Okuyama states that if $\chi \in \Irr(B_0(G))$, the restriction $\chi_H$ is irreducible for a subgroup $H$ of $G$, and the induced block $b^G$ is defined (where $b$ is the block of $\chi_H$), then $\chi_H$ lies in the principal block of $H$ (see Theorem 6.6 of \cite{N1}). However, in practice, it is not always the case that induced blocks from subgroups are defined.

In the first result of this paper, we contribute to this problem in the following way.

 \begin{thmA}
 Let $G$ be an almost simple group with socle $S$. Let $S \le H \le G$.
 If $\chi \in \irr G$ is in the principal block of $G$ and $\chi_S$ is irreducible,
 then $\chi_H$ is in the principal block of $H$.
 \end{thmA}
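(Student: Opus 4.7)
The plan is to reduce the problem, via Clifford theory and standard block-covering, to showing that the $p$-block $b$ of $H$ containing $\chi_H$ coincides with $B_0(H)$, and then to attack this reduced problem using the almost simple structure of $G$.

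First I would dispose of the easy pieces. Since $S\nor H$ and $\chi_S$ is irreducible, Clifford's theorem gives that $\chi_H$ is irreducible; denote by $b$ the $p$-block of $H$ containing $\chi_H$. Because $B_0(S)$ is $G$-invariant (being fixed by every automorphism of $S$) and $B_0(G)$ always covers $B_0(S)$, the block $B_0(S)$ is the unique block of $S$ covered by $B_0(G)$. Since $\chi_S$ appears in the restriction of $\chi\in\irr{B_0(G)}$ to $S$, this forces $\chi_S\in\irr{B_0(S)}$, and consequently $b$ covers $B_0(S)$.

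The natural next step would be Okuyama's going-down theorem (Theorem~6.6 of~\cite{N1}): if the induced block $b^G$ were defined, then since $\chi\in\irr{B_0(G)}$ is an irreducible constituent of $(\chi_H)^G$, Okuyama's theorem would force $b=B_0(H)$ directly. As the introduction warns, however, $b^G$ need not be defined for a general subgroup $S\le H\le G$; the usual sufficient condition $D\cdot\cent{G}{D}\subseteq H$ for a defect group $D$ of $b$ can genuinely fail — for example with $G=S_5$, $H=S=A_5$ and $p=3$, where $\cent{G}{P}=\langle(123),(45)\rangle\not\subseteq H$ for $P=\langle(123)\rangle\in\Syl_3(S)$. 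So an ``alternative group-theoretical technique,'' as promised in the abstract, is needed.

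To set this up, I would extract the structural consequences of $G$ being almost simple: since $\cent{G}{S}=1$, any normal $p$- or $p'$-subgroup of $H$ lies in $\cent{H}{S}=1$, and so $\oh{p}{H}=\oh{p'}{H}=1$. In particular $H$ is itself almost simple with socle $S$. A defect group $D$ of $b$ satisfies $D\cap S\in\Syl_p(S)$ (since $b$ covers $B_0(S)$), and the Frattini argument yields $G=S\cdot\norm{G}{D\cap S}$ as well as $H=S\cdot\norm{H}{D\cap S}$. This confines the action of $G/S$ on the Sylow $p$-subgroups of $S$ in a way that can be analysed by the automorphism-theoretic input referenced in~\cite{Gl}.

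The main obstacle, where the bulk of the proof must lie, is to establish $b=B_0(H)$ without direct appeal to Okuyama. I would attempt this by constructing a chain $S=H_0\le H_1\le\cdots\le H_r=H$ together with blocks $b_i$ of $H_i$ containing $\chi_{H_i}$, arranged so that at each link the induced block $b_i^{H_{i+1}}$ is defined — for instance by enlarging $D$ to a normalizer-friendly $p$-subgroup $D'$ of $H$ satisfying $D'\cdot\cent{H_{i+1}}{D'}\subseteq H_i$ — and then chaining Okuyama (or Brauer's third main theorem) along the links. Arranging such a chain is the hard step, and is precisely where the almost simple structure of $G$, together with classical fusion-theoretic facts about Sylow subgroups of simple groups, has to be used in an essential way.
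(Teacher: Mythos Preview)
Your setup is correct and you have localised the difficulty precisely: Okuyama's going-down theorem does not apply because $b^G$ need not be defined. However, the proposed fix --- building a chain $S=H_0\le\cdots\le H_r=H$ so that at each link the induced block $b_i^{H_{i+1}}$ is defined --- is not carried out, and it is not clear it can be. You would need, at every step, a defect group $D'$ of $b_i$ with $D'\cent{H_{i+1}}{D'}\subseteq H_i$, and you give no mechanism for producing such a chain; the vague appeal to ``fusion-theoretic facts'' does not pin down what is actually required. This is the gap.

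The paper's route is different and sidesteps induced blocks altogether. The key object is the Alperin--Dade subgroup $V=S\cent{G}{Q}$ for $Q\in\Syl_p(S)$. The structural input (Theorem~B) is that $V/S$ has a normal $p$-complement; this is \cite{GGLN} for $p$ odd and \cite{Gl} for $p=2$, so your instinct to invoke \cite{Gl} was right, but the precise statement needed is this one. The block-theoretic input is not Okuyama but Lemma~3.1 of \cite{NT2}: for $N\nor G$ and $V=N\cent{G}{Q}$, every character of $G$ lying over a principal-block character of $V$ is itself in $B_0(G)$. These two facts feed into an elementary double induction (Theorem~C): by induction one may take $U$ maximal in $G$; if $V<G$ one descends to $V\cap U=N\cent{U}{Q}$ and goes back up via the \cite{NT2} lemma; if $V=G$ then $G/N$ has a normal $p$-complement $K/N$, and either $U$ has $p'$-index (handled by the easy Lemma~\ref{p'}) or $K\le U\nor G$. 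No induced block ever needs to be defined.
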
 
 
 Since, as we have noted, the character values of almost simple groups are not fully known, Theorem A relies on the following 
 deep group-theoretical result, which is not as well-known as it should be. Recall that when extending principal block characters from a normal subgroup $N$ of $G$ to $G$, the normal subgroup $V=N\cent GQ$, where
 $Q \in \syl pN$, plays a key role (by the Alperin--Dade theory on principal blocks).  Theorem B does use the classification of finite simple groups when $p$ is odd, but not when $p=2$.

 \begin{thmB}
 Suppose that $G$ is almost simple with socle $S$. Let $Q \in \syl pS$ and $V=S\cent GQ$.
Then $V/S$ has a normal $p$-complement.
\end{thmB}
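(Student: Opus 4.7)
The plan is to translate the statement into a question about $\bar A := \cent{\Aut(S)}{Q}\Inn(S)/\Inn(S) \le \Out(S)$, then handle $p=2$ and $p$ odd separately. First, the second isomorphism theorem gives $V/S \cong \cent{G}{Q}/\cent{S}{Q}$. Setting $C = \cent{G}{Q}$ and $C_0 = \cent{S}{Q} = C \cap S$, the Sylow $p$-subgroup of $C_0$ is $\zent{Q}$: any $p$-subgroup $R \le C_0$ commutes with $Q$, so $RQ$ is a $p$-subgroup of $S$ inside $Q$, forcing $R \le \zent{Q}$. Since $C$ centralizes $Q$ we also have $\zent{Q} \le \zent{C}$, and Schur--Zassenhaus then yields $C_0 = \zent{Q} \times O_{p'}(C_0)$. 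Conjugation embeds $V/S$ into $\Out(S)$ with image inside $\bar A \cong \cent{\Aut(S)}{Q}/\cent{S}{Q}$; since the normal $p$-complement property passes to subgroups, it suffices to show that $\bar A$ has a normal $p$-complement.

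For $p = 2$, the plan is to invoke Glauberman's result \cite{Gl}, which states in our language that for any finite group $S$ with Sylow $2$-subgroup $Q$, the quotient $\cent{\Aut(S)}{Q}/\cent{S}{Q}$ has a normal $2$-complement. Its proof relies on the Feit--Thompson solvability of odd-order groups together with the $Z^{*}$-theorem applied to the central involutions of $\zent{Q}$ (which lie in $\zent{C}$), so no appeal to the classification is required.

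For $p$ odd, I would argue by CFSG. The alternating and sporadic cases are direct since $|\Out(S)| \le 4$. For $S$ of Lie type in characteristic $\ell$, using the standard decomposition $\Out(S) = D \rtimes (F \times \Gamma)$ (diagonal, field, graph), I would track which outer classes have a representative centralizing $Q$: when $p = \ell$, a field automorphism centralizes a unipotent $Q$ only when it acts trivially on the defining field, forcing $\bar A \le D\Gamma$ modulo scalars; when $p \ne \ell$, the analysis reduces to the action of $\Out(S)$ on the normalizer of a maximal torus containing a Sylow $p$-subgroup, where the normal $p$-complement of $\bar A$ can be read off case by case.

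The main obstacle is the $p = 2$ case: extracting and correctly applying a CFSG-free statement like that of \cite{Gl} about centralizers of Sylow $2$-subgroups in $\Aut(S)$ for a general simple $S$. In the odd case the delicacy is the Lie-type analysis when $p$ divides both $|D|$ and $|F|$, where diagonal and field automorphisms interact nontrivially with $Q$, but this is routine given the explicit structure of $\Out(S)$.
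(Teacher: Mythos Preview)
Your reduction of $V/S$ to a subgroup of $\bar A = \cent{\Aut(S)}{Q}\Inn(S)/\Inn(S)$ is correct, and your treatment of $p=2$ matches the paper exactly: the paper also simply invokes Theorem~1 of Glauberman \cite{Gl}. (A small caveat: \cite{Gl} is stated for groups with $\oh{2'}{S}=1$, not ``any finite group $S$''; of course this covers our simple $S$.)

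For $p$ odd, the paper takes a different and much shorter route: it cites the main theorem of Glauberman--Guralnick--Lynd--Navarro \cite{GGLN}, which gives the stronger conclusion that $V/S$ is in fact a $p'$-group. Your proposal is effectively to reprove (a weakening of) that theorem from scratch via a CFSG case analysis. The outline is in the right direction, but the Lie-type case is not ``routine given the explicit structure of $\Out(S)$'': the interaction you flag, when $p$ divides both the order of the diagonal part and the order of the field-automorphism part, is exactly the substance of \cite{GGLN}, and reading it off from the shape of $\Out(S)$ is not straightforward. So as a proof plan this is viable but significantly incomplete for odd $p$; the efficient move is to cite \cite{GGLN} directly.

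You also have the difficulty assessment inverted. You describe $p=2$ as the ``main obstacle'' and the odd case as routine, but the situation is the opposite: $p=2$ is settled by a 1968 CFSG-free result, while the odd case is the one that requires the full classification and a 2020 paper devoted to it.
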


\begin{proof}
If $p$ is odd, then  we have that $V/S$ is a $p'$-group by the main result of \cite{GGLN}.
If $p=2$, then this follows from Theorem 1 of \cite{Gl}.
\end{proof}

Theorem A is then obtained from the following elementary result.

\medskip

\begin{thmC}
Suppose that $N \nor G$. Let $Q\in \syl pN$ and $V=N\cent GQ$. Assume that $V/N$ has a normal $p$-complement.
 Suppose that $\chi \in \irr{B_0(G)}$
is such that $\chi_N=\theta$ is irreducible. If $N \le U \le G$, then  $\chi_U \in \irr{B_0(U)}$.
\end{thmC}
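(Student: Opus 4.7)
The plan is to apply Okuyama's ``going down'' theorem (mentioned in the introduction) twice, with the hypothesis on $V/N$ bridging the two applications. The first application shows that $\chi_V \in \Irr(B_0(V))$: the restriction $\chi_V$ is irreducible since $\chi_N = \theta$ is, and under the assumptions one verifies $\theta \in \Irr(B_0(N))$, so the block $b$ of $\chi_V$ in $V$ covers $B_0(N)$. A defect group $D$ of $b$ then contains a $V$-conjugate of $Q$; after conjugating, $Q \le D$, and $\cent GD \le \cent GQ \le V$. Thus $b^G$ is defined, and Okuyama's theorem forces $b = B_0(V)$.

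The hypothesis now enters. Let $L$ be the preimage in $V$ of the normal $p$-complement of $V/N$, so that $N \le L \nor V$ with $L/N$ a $p'$-group and $V/L$ a $p$-group. Since $V/L$ is a $p$-group, $B_0(V)$ is the only block of $V$ covering $B_0(L)$, and hence the irreducible restriction $\chi_L$ lies in $\Irr(B_0(L))$.

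The final step is to transfer from $\chi_L$ to $\chi_U$. Since $L/N$ is a $p'$-group, $Q \in \syl pL$; one then applies Okuyama at the $L$-level to $\chi_L \in \Irr(B_0(L))$ and the subgroup $L \cap U \supseteq N$ to obtain $\chi_{L \cap U} \in \Irr(B_0(L \cap U))$, using that the centralizer conditions now fall within $L$ thanks to the $p'$-structure of $L/N$. The ascent from $L \cap U$ to $U$ then uses the $p$-group quotient principle: $B_0(U)$ is the unique block of $U$ covering $B_0(L \cap U)$ once the relevant quotient is verified to be a $p$-group, which the hypothesis on $V/N$ guarantees. The main obstacle is precisely this final transfer; verifying the centralizer condition at the $L$-level and ensuring the $p$-group quotient property for the ascent to $U$ both require the hypothesis on $V/N$ in an essential way, as without it the $p$-complement structure needed to control the defect groups disappears.
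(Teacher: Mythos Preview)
Your plan has two genuine gaps, one in each of the last two steps.

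\textbf{Step 3 (Okuyama from $L$ to $L\cap U$).} For Okuyama's theorem you need the block $b$ of $\chi_{L\cap U}$ to satisfy that $b^{L}$ is defined, i.e.\ $\cent L D \le L\cap U$ for a defect group $D$ of $b$. Since $b$ covers $B_0(N)$ and $L/N$ is a $p'$-group, $D$ is (a conjugate of) $Q$. But $V=N\cent GQ$ forces $L=N(L\cap\cent GQ)=N\,\cent LQ$, so $\cent LQ\le U$ would give $L\le U$. Thus $b^{L}$ is defined \emph{only} when $L\le U$, which is not assumed. (This step can be rescued, but by the $p'$-index Lemma~\ref{p'} rather than Okuyama: $|L:L\cap U|$ divides $|L:N|$, which is prime to $p$.)

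\textbf{Step 4 (ascent from $L\cap U$ to $U$).} You assert that $U/(L\cap U)$ is a $p$-group. Since $L\nor G$, this quotient embeds in $G/L$, but the hypothesis only gives that $V/L$ is a $p$-group; nothing controls $G/V$. So $UL/L$ is a $p$-group only when $U\le V$, again not assumed. The ``$p$-group quotient principle'' is simply unavailable here.

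What actually bridges $V\cap U$ to $U$ in the paper is not a $p$-group quotient but Lemma~3.1 of \cite{NT2}: since $V\cap U=N\cent UQ$ with $Q\in\Syl_p(N)$ and $N\nor U$, the principal block is the \emph{unique} block of $U$ covering $B_0(V\cap U)$. The paper reaches $\chi_{V\cap U}\in\Irr(B_0(V\cap U))$ by induction (restricting first to the normal subgroup $V$ and then applying the inductive hypothesis inside $V$), and then ascends to $U$ via this lemma. Your outline never invokes this result, and without it the final transfer does not go through.
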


Theorem C is not necessarily true if $G/N$ does not have a normal $p$-complement, as shown by 
$G={\sf A}_4$, $p=2$, $U \in \syl 3G$, $N=1$, and $\chi=\lambda \in \irr G$ is linear and  not trivial.
%If $G$ is a finite group and $p$ is a fixed prime, let us denote by $B_0(G)$ the principal 
%$p$-block of $G$. As it is well-known, if $\chi \in \irr{B_0(G)}$, $H \le G$ and $\chi_H \in \irr H$,
%then it is not in general true that $\chi_H \in \irr{B_0(H)}$, even if $\chi$ is linear as shown by
%
\medskip

Why are we now interested in Theorem A? In \cite{NT3}, using fields of values, a powerful character-theoretic criterion for the existence of a normal $p$-complement was established, extending a classical result of Thompson. Specifically,  Theorem C of \cite{NT1}
 states that if $p$ divides $\chi(1)$ for every non-linear $\chi \in \mathrm{Irr}(G)$ whose values lie in the $p$-th cyclotomic field 
$$\Q_p=\Q(e^{2\pi i/p}),$$ 
then $G$ has a normal $p$-complement. For $p=2$, Theorem A of \cite{NT1} establishes both directions. It has long been suspected that it would be sufficient to impose this condition only on the characters of the principal $p$-block. By applying Theorem A, 
we can prove this now for $p = 2$; for odd primes $p$ 
we reduce the problem to   a question
of almost simple groups. The main difficulty that remains in that case is the absence of a convenient ``going up" version of Theorem A.
 
 \begin{thmD}
 Let $G$ be a finite group. Then $\chi(1)$ is even for all non-linear rational valued $\chi \in \irr{B_0(G)}$
 if and only if $G$ has a normal 2-complement.
 \end{thmD}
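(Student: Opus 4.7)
The plan is to prove the two implications separately. The direction ``if $G$ has a normal $2$-complement then every non-linear rational-valued $\chi\in\Irr(B_0(G))$ has even degree'' is a short block-theoretic observation; the converse requires combining the rational-character criterion of \cite{NT1} with Theorem~A of the present paper.

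Assume first that $G$ has a normal $2$-complement $N=\OO_{2'}(G)$. The Fong--Reynolds correspondence over $1_N$ identifies $\Irr(B_0(G))$ with $\Irr(G/N)$, and $G/N$ is isomorphic to a Sylow $2$-subgroup of $G$; hence every non-linear character in $B_0(G)$ has non-trivial $2$-power degree, and in particular even degree, and no rationality hypothesis is needed for this direction.

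For the converse I argue by contradiction on a counterexample $G$ of minimal order. By Theorem~A of \cite{NT1}, lacking a normal $2$-complement provides some non-linear rational-valued $\chi\in\Irr(G)$ of odd degree, so the task is to locate such a character inside $\Irr(B_0(G))$, contradicting the standing hypothesis. Standard reductions come first: inflation from $G/\OO_{2'}(G)$ preserves $B_0$-membership, degrees and values, so minimality forces $\OO_{2'}(G)=1$; an analogous inflation-plus-Schur--Zassenhaus argument applied to a central involution rules out central $2$-elements, so $Z(G)$ has odd order. Then, analysing a minimal normal subgroup and using Theorem~A of the present paper to track principal-block membership under restriction to a normal subgroup, I would reduce to the case where $G$ has a unique minimal normal subgroup $S$ which is non-abelian simple with $C_G(S)=1$; that is, $G$ is almost simple with socle~$S$.

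The heart of the proof---and the main obstacle---is the almost simple case: one must exhibit, for every almost simple $G$ with non-abelian simple socle $S$, a non-linear rational-valued $\chi\in\Irr(B_0(G))$ of odd degree. The natural strategy is to find a suitable $\psi\in\Irr(B_0(S))$ by a CFSG-based case analysis across the alternating, sporadic, and Lie-type families, and then to lift $\psi$ to $G$ while preserving rationality, odd degree, and membership in the principal block. This last requirement is precisely where the ``going up'' problem flagged in the introduction appears: Theorem~A only controls restrictions, so principal-block membership of the extension must be verified directly, using the block structure of $V=SC_G(Q)$ controlled by Theorem~B. For $p=2$, Theorem~B is available without the classification and, combined with the Galois rigidity of rational-valued characters, should be enough to close the argument; for odd $p$ it is exactly this extension step that is not yet available, which is why Theorem~D is established only for $p=2$.
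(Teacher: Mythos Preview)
Your proposal correctly identifies the architecture of the argument but leaves the decisive step unproved: you reduce to the almost simple case and then write that Theorem~B together with ``Galois rigidity of rational-valued characters \dots\ should be enough to close the argument,'' without saying how. That is precisely the gap. The actual key is \emph{Fong's lemma}: every nontrivial real-valued irreducible $2$-Brauer character of a finite group has even degree. Consequently, if $\hat\theta$ is rational-valued of odd degree, the trivial Brauer character must occur in its $2$-modular reduction, forcing $\hat\theta \in B_0$. This is how the paper resolves the ``going up'' problem for $p=2$: it never needs to track block membership through extensions via Theorem~B at all---rationality plus odd degree \emph{automatically} places the character in the principal $2$-block. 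In fact, this observation alone, combined with \cite[Theorem~A]{NT1}, already gives the ``only if'' direction of Theorem~D directly, without any minimal-counterexample reduction: if $G$ has no normal $2$-complement, \cite{NT1} supplies a non-linear rational-valued $\chi\in\Irr(G)$ of odd degree, and Fong's lemma puts it in $B_0(G)$. The paper routes this through the general framework of condition~$(\star)$ and Proposition~\ref{pro:going to G} because that machinery is designed to also handle odd~$p$ (Theorem~E), but for $p=2$ the content is Fong's lemma.

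There is also a subsidiary error in your reduction: a minimal normal subgroup of $G$ need not be simple---it is a direct power $S^n$---so ``$G$ almost simple with socle $S$'' is not what you get after your reductions. The paper's Proposition~\ref{pro:going to G} handles this via tensor induction through the wreath product $\Aut(S)\wr \mathsf{S}_n$, which requires real work (compatibility of principal blocks across the layers $N\le M\le T\le G$). Your sketch does not address this. Finally, your ``central involution plus Schur--Zassenhaus'' step is unclear as written; but since Fong's lemma makes the whole reduction unnecessary for $p=2$, this is moot.
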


The $p$ odd case depends on whether simple groups satisfy the $(\star)$ condition \ref{con:simples}. We verify condition $(\star)$ for all primes for alternating groups and for groups of Lie type in their defining characteristic (see Propositions \ref{defi} and \ref{alt}) as well as for sporadic groups (this can be easily checked with \cite{GAP}).

\begin{thmE}
Let $p$ be any odd prime number and 
assume that every finite non-abelian simple group of order divisible by $p$ satisfies condition $(\star)$ of \ref{con:simples} for $(\mathbb{Q}_p,p)$. Then 
$p$ divides $\chi(1)$ for  all non-linear $\QQ_p$-valued $\chi \in \irr{B_0(G)}$ if and only if $G$
 has a normal $p$-complement.
\end{thmE}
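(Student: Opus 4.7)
The plan is to split into two directions. The ``if'' direction does not require $(\star)$ at all. If $K=\oh{p'}G$, then $G/K$ is a $p$-group with a single (principal) block, and inflation identifies $\irr{B_0(G)}$ with $\irr{G/K}$; since $G/K$ is a $p$-group, every non-linear irreducible character of $G/K$ has degree divisible by $p$, giving the conclusion without any hypothesis on fields of values.

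For the ``only if'' direction I would argue by contrapositive and induction on $|G|$: if $G$ is not $p$-nilpotent, I would exhibit a non-linear $\QQ_p$-valued $\chi\in\irr{B_0(G)}$ with $p\nmid\chi(1)$. A first reduction is to $\oh{p'}G=1$: if $1\neq N\triangleleft G$ is a $p'$-subgroup, then $G/N$ is strictly smaller and still not $p$-nilpotent, and a character $\bar\chi$ supplied by the inductive hypothesis applied to $G/N$ inflates to a character of $G$ with the same values, degree, and with $\bar\chi\in\irr{B_0(G/N)}$ inflating into $\irr{B_0(G)}$. Analogous reductions, using the Alperin--Dade theory of principal blocks together with Theorem~A to remain inside $B_0$ under restriction and inflation, eventually force $G$ to have generalised Fitting subgroup $F^{*}(G)$ with a non-abelian component $S$ of order divisible by $p$.

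At that stage I would fix such a component $S$ of $F^{*}(G)$ and consider the almost simple section $L=\norm G S/\cent G S$, whose socle is isomorphic to $S$. By hypothesis, $S$ satisfies condition~$(\star)$ of~\ref{con:simples} for $(\QQ_p,p)$, which supplies a suitable non-linear $\QQ_p$-valued character of $L$ in $\irr{B_0(L)}$ of $p'$-degree. The remaining task is to promote this character to a character of $G$ in $\irr{B_0(G)}$ with the same non-linearity, degree and rationality properties, via Clifford theory along the components of $F^{*}(G)$; Theorem~A (``going down'') and Theorem~B, combined with the Alperin--Dade description of $B_0$, are what ensure that the Clifford correspondents one chooses remain in the principal block throughout.

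The main obstacle is exactly this lifting---the ``going up'' step that the introduction singles out as the outstanding difficulty. Condition~$(\star)$ is designed so that the required constituent exists at the level of the almost simple section, but carrying all four properties (non-linearity, $\QQ_p$-valuedness, $p'$-degree, and membership in $B_0(G)$) simultaneously through Clifford correspondence is delicate, and requires that the extension chosen at the almost-simple level have a suitable stabiliser and block-theoretic behaviour. It is this simultaneous preservation, rather than any one of the four properties in isolation, which I expect to be the technical heart of the argument.
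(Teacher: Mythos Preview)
Your ``if'' direction is correct and matches the paper. For ``only if'', however, your sketch diverges from the paper's argument in two substantive ways and contains a real gap.

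First, the paper does \emph{not} attempt to reduce until $F^*(G)$ has a non-abelian component. Instead it argues in two stages: (a) prove that $G$ is $p$-solvable (Theorem~\ref{pro:psolvable}), and (b) once $G$ is $p$-solvable, $\irr{B_0(G)}=\irr{G/\oh{p'}G}$ by \cite[Theorem~10.20]{N1}, so after reducing to $\oh{p'}G=1$ the group has a unique block and one simply cites the non-block result \cite[Theorem~2.4]{NT3}. Your ``analogous reductions'' to force a non-abelian component are not spelled out and cannot work as stated: for instance $G=\SSS_3$ with $p=3$ has $\oh{3'}G=1$, is not $3$-nilpotent, yet $F^*(G)=C_3$ has no non-abelian component. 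The paper's two-stage structure absorbs all such $p$-solvable cases into step~(b).

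Second, the ``going up'' you flag as the main obstacle is in fact resolved, given condition~$(\star)$, by Proposition~\ref{pro:going to G}. That proposition does not pass through the almost-simple section $\norm G S/\cent G S$ but works directly with a minimal normal subgroup $N=S_1\times\cdots\times S_n$: one forms $\mathcal{Y}=\X_1\times\cdots\times\X_n\subseteq\irr{B_0(N)}$, picks a $G$-orbit of $p'$-size (possible since $|\mathcal{Y}|=|\X|^n$ is prime to $p$), and for a representative $\lambda$ with stabiliser $T=G_\lambda$ \emph{tensor-induces} the extension $\hat\xi_1$ supplied by~$(\star)$(iii) to an extension $\mu$ of $\lambda$. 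Theorem~A applied factor-by-factor shows $\mu_M\in\irr{B_0(M)}$ for $M=\bigcap_i\norm T{S_i}$, then \cite[Lemma~3.1]{NT2} forces $\mu_T\in\irr{B_0(T)}$, and finally $\chi=(\mu_T)^G\in\irr{B_0(G)}$ by Brauer's third main theorem, with $p'$-degree since $|G:T|$ is prime to $p$. Thus condition~$(\star)$ is exactly what makes the lifting go through; the unresolved ``going up'' alluded to in the introduction concerns \emph{verifying}~$(\star)$(iii) for simple groups, not the passage from~$(\star)$ to Theorem~E.
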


Contrary to the case of Theorem C of \cite{NT1}, notice that using principal blocks,  we get an {\sl if and only if} statement in Theorem E.

\medskip

Could it be that Theorems D and E admit a version for general blocks? Some of the global/local conjectures and theorems on the representation theory of finite groups admit a Galois version. (See \cite{N2} for Alperin-McKay, 
and \cite{MN2} and \cite{MMRS} for Brauer's height zero on principal blocks, for instance.)  These provide interesting  
questions and sometimes unforeseen consequences, at the same time that they lead us 
to study the action of the absolute Galois group on key representation theory objects.
In \cite{MN1}, a global characterization of the Brou\'e--Puig nilpotent blocks \cite{BP} (which are locally defined objects) was proposed:
a Brauer block is nilpotent if and only if all of its height zero characters have the same degree.
To this day, this conjecture is wide open, even for solvable groups (where its proof depends on an unsolved regular orbit problem;
see \cite{G} for groups of odd order).  Inspired by Theorems D and E, the final purpose in this note is to propose a finer Galois version of this conjecture.  If $B$ is an arbitrary  $p$-block, it is not even true that there exists some irreducible character in $B$
having  values in $\Q(e^{2\pi i/p})$, so definitely something has to be changed.
We believe that the key characters in the general case are the so-called {\sl almost $p$-rational characters}, as defined in \cite{HMM}.
If $\chi \in \irr G$ and $c$ is its conductor (the smallest positive number such that the $c$-cyclotomic field $\Q_c$ contains the values of
$\chi$), recall that $\chi$ is almost $p$-rational if $p^2$ does not divide $c$.  It was proven by Brou\'e and Puig in \cite{BP} that  
every $p$-block $B$ contains at least one height zero almost $p$-rational character.
Extensive computations lead us to propose the following.

\begin{conjF}
A $p$-block $B$ is nilpotent if and only if all of its height zero almost $p$-rational characters in $B$ have the same degree.
\end{conjF}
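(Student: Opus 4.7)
The plan is to handle the two implications separately. The forward direction follows directly from the Brou\'e--Puig structure theorem, while the reverse direction is the main content and in fact strengthens the Malle--Navarro conjecture \cite{MN1}, since the hypothesis is imposed on a strictly smaller set of characters.

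For the forward direction, assume $B$ is a nilpotent block with defect group $D$. By \cite{BP} there is a height-preserving bijection $\Irr(B)\to\Irr(D)$ and a positive integer $e$ such that $\chi(1)=e\cdot\tilde\chi(1)$ whenever $\chi\in\Irr(B)$ corresponds to $\tilde\chi\in\Irr(D)$. Since $D$ is a $p$-group, its height-zero characters are its linear characters, so every height-zero character of $B$ has degree $e$; a fortiori, every almost $p$-rational height-zero character of $B$ has degree $e$.

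For the reverse direction one must prove: whenever $B$ is not nilpotent, $\Irr(B)$ contains two almost $p$-rational height-zero characters of different degrees. I would attempt a minimal counterexample argument, taking $(G,B)$ of minimal order violating this, and then running through successive Clifford-theoretic reductions (Fong--Reynolds reduction, block covering above normal subgroups, Dade's theory of central extensions of blocks) in the spirit of the Navarro--Sp\"ath-type reductions used for global/local conjectures, in order to force $G$ to be quasi-simple modulo its $p$-core. The conclusion would then need to be verified on each class of quasi-simple groups: the sporadic groups by direct inspection in \cite{GAP}; the alternating groups by a hook-and-partition analysis extending Proposition~\ref{alt}; groups of Lie type in defining characteristic using Steinberg tensor products and the $p$-rationality of the Steinberg character; and groups of Lie type in non-defining characteristic using Deligne--Lusztig theory together with the Brou\'e--Michel $d$-Harish-Chandra theory.

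I expect the main difficulty to be twofold. First, the reduction must track the conductors of characters under the Clifford correspondence, and almost $p$-rationality is considerably less stable than either rationality or $p$-rationality: restriction and extension of characters can add or remove a factor of $p$ in the conductor, so the bookkeeping is delicate. Second, and more seriously, for quasi-simple groups of Lie type in non-defining characteristic, producing two height-zero almost $p$-rational characters of distinct degrees inside every non-nilpotent block requires precise control of the Galois action on Lusztig series, which is not uniformly available in the current literature; this is essentially the same obstruction that has so far prevented a proof of the Malle--Navarro conjecture itself.
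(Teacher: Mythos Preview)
The statement you are attempting is Conjecture~F, which the paper explicitly poses as an \emph{open conjecture}; there is no proof of it in the paper to compare against. What the paper does establish is the forward direction (implicitly, since this is folklore from \cite{BP}) and the reverse direction only in the special case where the defect group $D$ is normal in $G$. Your forward direction is correct and matches the standard argument.

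For the reverse direction, your proposal is not a proof but an outline of a research programme, and you yourself identify the obstructions. These obstructions are genuine: Conjecture~F strictly refines the Malle--Navarro equal-height-zero-degree conjecture \cite{MN1}, which remains open even for solvable groups (cf.\ \cite{G}), so a proof of Conjecture~F in full generality would in particular settle that conjecture. The reduction-to-quasi-simple strategy you sketch is plausible in shape, but no such reduction is currently known even for the weaker conjecture of \cite{MN1}, and the conductor-tracking issue you flag is a real additional complication.

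By contrast, the paper's argument for the normal-defect case proceeds quite differently and does not attempt any reduction to simple groups. One passes to $\bar G = G/\Phi(D)$ and the dominated block $\bar B$ with elementary abelian defect group $D/\Phi(D)$; the second main theorem forces every $\chi \in \Irr(\bar B)$ to vanish off $p$-sections meeting $D/\Phi(D)$, so \emph{every} character of $\bar B$ is automatically almost $p$-rational. The hypothesis then says all height-zero characters of $\bar B$ have the same degree, whence $\bar B$ is nilpotent by Okuyama--Tsushima \cite{OT}, and one concludes via the argument of \cite[Theorem~5.2]{MN1}. This is a short local argument specific to the normal-defect situation and does not extend to the general case.
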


Recall that the principal $p$-block is nilpotent if and only if $G$ has a normal
$p$-complement. Hence Conjecture F for the principal block says that a group has a normal $p$-complement if and only if all the almost $p$-rational characters in $B_0(G)$ are linear.  This is easily implied by Theorem E.

\section{Proof of Theorems A and C}

If $G$ is a finite group and $p$ is a fixed prime, let us denote by $B_0(G)$ the principal 
$p$-block of $G$.  Our notation follows \cite{N1} and \cite{N3}.  We begin by proving the elementary Theorem C.
We start with the following.

\begin{lem}\label{p'}
Suppose that $\chi \in \irr{B_0(G)}$, $U\le G$ has $p'$-index, and $\mu=\chi_U$ is irreducible.
Then $\mu \in \irr{B_0(U)}$.
\end{lem}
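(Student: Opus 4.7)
The plan is to invoke Brauer's central-character criterion for membership in the principal block. Fix a prime ideal $\mathfrak{p}$ above $p$ in the ring of algebraic integers $R$ of a large enough cyclotomic field containing the values of $\chi$, and recall that $\chi \in \irr{B_0(G)}$ if and only if, for every conjugacy class $C$ of $G$ with representative $x$, the algebraic integer $\omega_{\chi}(\hat{C}) = |C|\chi(x)/\chi(1)$ satisfies $\omega_{\chi}(\hat{C}) \equiv |C| \pmod{\mathfrak{p}}$. My goal is to verify the analogous congruence for $\mu = \chi_U$ on every $U$-class.

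Pick a $U$-class $D$ with representative $y \in U$ and let $C$ be the $G$-class containing $y$. Observe that $\mu(y) = \chi(y) = \chi(x)$ (for $x \in C$) and $\mu(1) = \chi(1)$, so the character ratios agree. Since $\cent{U}{y} = U \cap \cent{G}{y}$, the second isomorphism theorem yields
\[
\frac{|C|}{|D|} \ = \ \frac{[G : \cent{G}{y}]}{[U : U \cap \cent{G}{y}]} \ = \ [G : U \cdot \cent{G}{y}],
\]
which is a positive integer dividing $[G:U]$ and therefore coprime to $p$. In particular $|C|/|D|$ is a unit in the localization $R_{\mathfrak{p}}$.

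Setting $r = \chi(x)/\chi(1) - 1$, the hypothesis $\chi \in \irr{B_0(G)}$ gives $|C|\cdot r \in \mathfrak{p}$. Multiplying by the $p$-unit $|D|/|C|$ in $R_{\mathfrak{p}}$ yields $|D|\cdot r \in \mathfrak{p}\, R_{\mathfrak{p}}$. To conclude $|D|\cdot r \in \mathfrak{p}$, which is exactly the congruence $\omega_{\mu}(\hat{D}) \equiv |D| \pmod{\mathfrak{p}}$, it suffices to know that $|D|\cdot r$ is an algebraic integer. But $|D|\cdot r = \omega_{\mu}(\hat{D}) - |D|$, and $\omega_{\mu}(\hat{D})$ is an algebraic integer precisely because $\mu$ is irreducible. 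Since this holds for every $U$-class, $\mu \in \irr{B_0(U)}$.

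The one delicate point is the algebraic-integrality step in the last paragraph: without irreducibility of $\mu$ the map $\omega_{\mu}$ need not take values in $R$, so the passage from $\mathfrak{p}\, R_{\mathfrak{p}}$ back to $\mathfrak{p} \cap R$ would not be justified. Everything else reduces to the elementary observation that $[G:U]$ being coprime to $p$ forces every class-size ratio $|C|/|D|$ to be a $p$-unit, making the congruence mod $\mathfrak{p}$ transfer cleanly from $G$ to $U$.
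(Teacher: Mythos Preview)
Your proof is correct and rests on the same core computation as the paper's: the ratio $|C|/|D|=[G:U\cent{G}{y}]$ divides $[G:U]$ and is hence a $p'$-integer, so the congruence $\chi(y)/\chi(1)\equiv 1$ (valid in the localization because $\chi\in B_0(G)$) transfers from $G$-class sizes to $U$-class sizes. The difference is in packaging. The paper first locates a defect group $P$ of the block of $\mu$ via a defect class, then invokes Problem~4.5 of \cite{N1}, which only requires checking the congruence on $p$-regular elements centralized by $P$; it then shows $(\chi(y)/\chi(1))^*=1$ by an explicit localization argument. You instead apply the bare central-character criterion $\omega_\mu(\hat D)\equiv |D|\pmod{\mathfrak p}$ on \emph{every} $U$-class, which sidesteps defect classes and Problem~4.5 entirely. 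Your route is shorter and more self-contained; the paper's route has the minor advantage of isolating exactly where the irreducibility of $\mu$ enters (through the defect-class formalism) and of making the connection to defect groups explicit, but for the lemma as stated your direct argument is cleaner.
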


\begin{proof}
Let $x^U$ be a defect class for $\mu$, where $x \in U$.  (See the definition before Theorem 4.4 in \cite{N1}.)
Let $P$ be a Sylow $p$-subgroup of $\cent Ux$, so that $P$ is a defect group of the block of $\mu$.
In particular,
$$\left(\mu(x)|x^U| \over \mu(1) \right)^* \ne 0 \, .$$
By multiplying by the $p'$-number $|G:U|$, we obtain
$$0 \ne \left(\mu(x)|G:\cent Ux| \over \mu(1) \right)^*= \left(\chi(x)|G:\cent Gx| \over \chi(1) \right)^* |\cent Gx:\cent Ux|^*=|G:\cent Ux|^*\, .$$
We have that $P$ is contained in a Sylow $p$-subgroup of $\cent Gx$. By Problem 4.5 of \cite{N1}, we only need to prove that
$$\left(\chi(y) |y^U| \over \chi(1) \right)^*=|y^U|^*$$ for every $p$-regular $y \in U$ with $[y,P]=1$. 
We know that
$$\left(\chi(y) |y^G| \over \chi(1) \right)^*=|y^G|^* \, .$$
Notice that $${\chi(y)\over \chi(1)} = \left(\chi(y) |y^G| \over \chi(1) \right) /|y^G| \in S \, ,$$
where $S=\{\alpha/\beta | \alpha \in {\bf R}, \beta \in {\bf R}-M \}$, and $\bf R$ is the ring of algebraic integers in $\C$.
We deduce that 
$$\left( \chi(y)\over \chi(1) \right)^*=1$$
and the rest easily follows. 
\end{proof}

The following is Theorem C.

\begin{thm}
Suppose that $N \nor G$. Let $Q\in \syl pN$ and $V=N\cent GQ$. Assume that $V/N$ has a normal $p$-complement.
 Suppose that $\chi \in \irr{B_0(G)}$
is such that $\chi_N$ is irreducible. If $N \le U \le G$, then  $\chi_U \in \irr{B_0(U)}$.
\end{thm}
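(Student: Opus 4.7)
The plan is to introduce a normal subgroup $L$ of $G$ sitting between $N$ and $V$, show that $\chi_L$ lies in $\irr{B_0(L)}$ by using that the principal block covers only the principal block, and then conclude via the characterization of principal block membership underlying the proof of Lemma~\ref{p'}.

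First I would observe that $\chi_W$ is irreducible for every $N\le W\le G$, since $\chi_N$ is. By the Frattini argument $G=N\mathbf{N}_G(Q)$, and both $N\le V$ and $\mathbf{N}_G(Q)$ (the latter because $\mathbf{C}_G(Q)\nor\mathbf{N}_G(Q)$) normalize $V$, so $V\nor G$. Let $L$ be the preimage in $V$ of the normal $p$-complement of $V/N$; since $L/N$ is characteristic in $V/N$, we get $L\nor G$, $N\le L\le V$, and $L/N$ is a $p'$-group. Because $B_0(L)$ is $G$-invariant, $B_0(G)$ covers only $B_0(L)$ among the blocks of $L$, so every irreducible constituent of $\chi_L$ lies in $B_0(L)$; combined with the irreducibility of $\chi_L$, this yields $\chi_L\in\irr{B_0(L)}$.

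To deduce $\chi_U\in B_0(U)$, I will invoke the characterization used in the proof of Lemma~\ref{p'}: it suffices to verify that $(\chi(y)|y^U|/\chi(1))^{*}=|y^U|^{*}$ for every $p$-regular $y\in U$ whose centralizer $\mathbf{C}_U(y)$ contains a Sylow $p$-subgroup $P_U$ of $U$. For such $y$, since $N\nor U$ we have $P_U\cap N\in\syl p N$, and hence $y\in\mathbf{C}_G(P_U\cap N)\le V$ (recalling that $V$ does not depend on the specific choice of Sylow $p$-subgroup of $N$, as $V\nor G$). Being $p$-regular, the image of $y$ in $V/N$ lies in the normal $p$-complement $L/N$, so $y\in L$. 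Moreover, $P_U\cap L=P_U\cap N\in\syl p L$ (since $L/N$ is a $p'$-group), and $y$ centralizes it, so $|y^L|$ is a $p'$-number. Applying the same characterization to $\chi_L\in B_0(L)$ gives $(\chi(y)/\chi(1))^{*}=1$, whence $(\chi(y)|y^U|/\chi(1))^{*}=|y^U|^{*}$, as required.

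The main conceptual point is recognizing that $L$ is normal in $G$, so that $\chi_L$ automatically lies in the principal block of $L$; the hypothesis that $V/N$ has a normal $p$-complement enters precisely to ensure that every $p$-regular element of $U$ centralizing a Sylow $p$-subgroup of $U$ must lie in $L$.
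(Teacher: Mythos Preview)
Your overall strategy is sound and yields a pleasant non-inductive argument, genuinely different from the paper's proof (which inducts on $|G:N|$ and then on $|G:U|$, reduces to $U$ maximal, and invokes Lemma~3.1 of \cite{NT2}). There is, however, a gap in your appeal to ``the characterization used in the proof of Lemma~\ref{p'}.'' In that proof the subgroup $P$ is a \emph{defect group of the block of $\chi_U$}, not a Sylow $p$-subgroup of $U$; Problem~4.5 of \cite{N1} requires verifying $(\chi(y)|y^U|/\chi(1))^{*}=|y^U|^{*}$ for all $p$-regular $y$ centralized by such a defect group $D$. Restricting to $y$ centralizing a full Sylow $P_U$ is not enough in general: for example, the degree-$4$ character of $\AAA_5$ at $p=2$ satisfies your condition (the only $2$-regular class of odd size is $\{1\}$) yet lies in a block of defect zero, not in $B_0(\AAA_5)$.

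The repair is painless and uses exactly your ingredients. Since $\chi_N\in\irr{B_0(N)}$, the block of $\chi_U$ covers $B_0(N)$, so a defect group $D$ of this block may be chosen with $D\cap N\in\syl pN$. Hence every $p$-regular $y\in U$ with $D\le\cent Uy$ centralizes a Sylow $p$-subgroup of $N$; as in your argument this forces $y\in V$ and then $y\in L$, and moreover $y$ centralizes $D\cap N\in\syl pL$ (because $|L:N|$ is prime to $p$). Your computation via $\chi_L\in\irr{B_0(L)}$ now gives $(\chi(y)/\chi(1))^{*}=1$, hence $(\chi(y)|y^U|/\chi(1))^{*}=|y^U|^{*}$ for all such $y$, which is precisely what Problem~4.5 of \cite{N1} asks. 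With this correction your direct approach stands, and it has the virtue of avoiding both the induction and the appeal to \cite{NT2} in the paper's proof.
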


\begin{proof} We argue by induction on $|G:N|$ and then $|G:U|$. 
Suppose that $U<W<G$. Then we have that
  $\chi_W=\tau \in \irr{B_0(W)}$ by induction. We have that $\cent WQ N/N \le V/N$ has a normal $p$-complement.
By induction in $W$, we then have that $\tau_U \in \irr{B_0(U)}$. 
Hence, we may assume that $U$ is maximal in $G$.
Suppose that $V<G$. Notice that $\chi_V \in \irr{B_0(V)}$, because $V\nor G$. By induction,
we have that $\chi_{V\cap U}$ is in the principal block. Now, $V \cap U=N\cent UQ$, and thus any irreducible
character of $U$ above $\chi_{V\cap U}$ is in the principal block applying Lemma 3.1 of \cite{NT2}.
 So we may assume that
$V=G$. Let $K/N=\oh{p'}{G/N}$. Suppose that $K<G$.  If $KU=G$, then $U$ has $p'$-index and we are done
by Lemma \ref{p'}.
Otherwise,  $K \le U \le G$, $U\nor\nor G$, and we are done. 
\end{proof}

Now the proof of Theorem A easily follows from Theorems B and C.

\section{Proofs of Theorems D and E}

We start by making the following definition. 
\begin{defi}[Condition $(\star)$ for $(\mathbb{K},p)$]\label{con:simples}
Let $\mathbb{Q}\sbs\mathbb{K}$ be a field. Assume that $S$ is a nonabelian simple group of order divisible by $p$. Then we say $S$ sastisfies condition $(\star)$ for $(\mathbb{K},p)$ if there there is an $\Aut(S)$ orbit $\mathcal{X}\sbs\Irr(B_0(S))$ such that
\begin{enumerate}
\item $|\X|$ is not divisible by $p$,
\item the characters in $\X$ are nonlinear, of degree not divisible by $p$ and $\mathbb{K}$-valued,
\item every $\theta\in\X$ extends to a $\mathbb{K}$-valued character $\hat\theta$ in $B_0(\Aut(S)_\theta)$.
\end{enumerate}
\end{defi}

Thanks to Theorem A, part (iii) of Condition $(\star)$ implies that for any $S\leq T\leq \Aut(S)_\theta$ we can find a $\mathbb{K}$-valued character $\hat\theta\in\Irr(B_0(T))$ extending $\theta$.

\begin{pro}\label{pro:going to G}
Let $G$ be a finite group with a minimal normal subgroup $N=S_1\times\dots\times S_n $ where $S_i\cong S$ is a nonabelian simple group of order divisible by $p$ satisfying condition $(\star)$ for $(\mathbb{K},p)$, and $n$ is some integer. Then $G$ has a nonlinear $p'$-degree $\mathbb{K}$-valued character in the principal block.
\end{pro}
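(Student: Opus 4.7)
The plan is to apply Clifford theory to a character $\varphi\in\Irr(B_0(N))$ of tensor-product form whose stabilizer $T=G_\varphi$ has $p'$-index in $G$, to extend $\varphi$ to $T$ using the coordinate-wise extensions provided by $(\star)$(iii), and to induce back to $G$.

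\emph{Step 1 (choosing $\varphi$).} Since $N$ is minimal normal in $G$ and its simple factors are all isomorphic to $S$, the group $G$ permutes $\{S_1,\ldots,S_n\}$ transitively. Fix identifications $S_i\cong S$ and transport the $\Aut(S)$-orbit $\X$ to orbits $\X_i\subseteq\Irr(B_0(S_i))$. Identify $\mathcal{Y}:=\X_1\times\cdots\times\X_n$ with the corresponding set of characters $\varphi=\theta_1\boxtimes\cdots\boxtimes\theta_n\in\Irr(B_0(N))$. Since $|\mathcal{Y}|=|\X|^n$ is coprime to $p$ by $(\star)$(i) and the $G$-orbit lengths partition $|\mathcal{Y}|$, at least one $G$-orbit on $\mathcal{Y}$ has $p'$-length. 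Pick $\varphi$ in one such orbit and set $T:=G_\varphi$; then $|G:T|$ is coprime to $p$, and $\varphi$ is nonlinear, $\mathbb{K}$-valued, of $p'$-degree, and lies in $B_0(N)$.

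\emph{Step 2 (extending to $T$).} Let $T_0\nor T$ be the kernel of the permutation action $T\to\Sym(n)$ on $\{S_1,\ldots,S_n\}$. Then $T_0$ normalizes each $S_i$ and, since $T\leq G_\varphi$, fixes each $\theta_i$; so the image $H_i$ of $T_0$ in $\Aut(S_i)$ satisfies $\Inn(S_i)\leq H_i\leq\Aut(S_i)_{\theta_i}$. By the remark following Definition \ref{con:simples} (i.e., Theorem A applied to $(\star)$(iii)) each $\theta_i$ extends to a $\mathbb{K}$-valued $\hat\theta_i\in\Irr(B_0(H_i))$; inflating to $T_0$ gives $\tilde\theta_i\in\Irr(B_0(T_0))$ satisfying $\tilde\theta_i(s_1,\ldots,s_n)=\theta_i(s_i)$ on $N$. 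The product $\tilde\varphi:=\tilde\theta_1\cdots\tilde\theta_n$ then has degree $\varphi(1)$ and restricts to $\varphi$ on $N$, hence is an irreducible $\mathbb{K}$-valued extension of $\varphi$ to $T_0$ lying in $B_0(T_0)$ (using closure of the principal block under tensor products of its irreducible characters). Choosing the $\hat\theta_i$ $T$-equivariantly — i.e., transporting a single $\hat\theta_1$ via the transitive action of $G$ on the factors — makes $\tilde\varphi$ invariant under $T$, and the standard wreath-product/tensor-induction construction over $T/T_0\hookrightarrow\Sym(n)$ produces the desired extension $\hat\varphi\in\Irr(B_0(T))$, still $\mathbb{K}$-valued.

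\emph{Step 3 (induction to $G$).} Set $\chi:=\hat\varphi^G$. By Clifford correspondence (since $T=G_\varphi$), $\chi\in\Irr(G)$ lies above $\varphi$, and $\chi(1)=|G:T|\cdot\varphi(1)$ is coprime to $p$; $\chi$ is nonlinear because $\varphi$ is; and induction preserves $\mathbb{K}$-valuedness. Finally, since $|G:T|$ is coprime to $p$, $T$ contains a Sylow $p$-subgroup of $G$, so by Brauer's third main theorem $B_0(T)^G=B_0(G)$ and hence $\chi\in\Irr(B_0(G))$, yielding the required character. The delicate point is Step 2: ensuring the coordinate extensions $\hat\theta_i$ can be arranged $T$-equivariantly and that the wreath-product extension over $T/T_0$ remains in the principal block while staying $\mathbb{K}$-valued; the principal-block closure under the permutation-wreath construction is the main technical hurdle.
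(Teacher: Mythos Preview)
Your Steps 1 and 3 are correct and match the paper. The genuine gap is in Step 2, and it has two parts.

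\textbf{Constructing the extension.} Tensor induction from $N_T(S_1)$ to $T$ produces an extension of $\varphi$ only when $T$ permutes the factors $S_1,\ldots,S_n$ \emph{transitively}, which need not hold for $T=G_\varphi$. The paper avoids this by first reducing to $C_G(N)=1$, so that $G\leq\Aut(N)\cong\Aut(S)\wr\mathsf{S}_n$, and then working inside the full stabilizer $K=\Aut(N)_\lambda\cong\Aut(S)_{\xi_1}\wr\mathsf{S}_n$, which \emph{does} act transitively. One extends $\xi_1$ to $\hat\xi_1\in\Irr(B_0(\norm{K}{S_1}/\cent{K}{S_1}))$ using Theorem~A, tensor-induces $\mu=(\hat\xi_1)^{\otimes K}$ (which is $\mathbb{K}$-valued by the tensor-induction formula and extends $\lambda$ by \cite[Cor.~10.5]{N3}), and only then restricts to $T=G\cap K$. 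Your description ``tensor-induction over $T/T_0$'' conflates this with extending a $T$-invariant character from $T_0$ to $T$, which in general carries a cohomological obstruction.

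\textbf{Principal-block membership.} Your $T_0$ is exactly the paper's $M=\bigcap_i\norm{T}{S_i}$. Your assertion that $\tilde\varphi\in B_0(T_0)$ by ``closure of the principal block under tensor products'' is not a general fact; the paper obtains $\mu_M\in B_0(M)$ by first showing each factor $(\hat\xi_1^{x_i})_M\in B_0(M)$ via Theorem~A (applied inside the almost simple quotient $\norm{K}{S_1}/\cent{K}{S_1}$) and then invoking \cite[Lemma~2.3]{GRSS} for the product. The passage from $M$ to $T$---the hurdle you flag---is handled by the observation that if $Q\in\Syl_p(M)$ and $Q_i=Q\cap S_i$, then
\[
\cent{T}{Q}\subseteq\bigcap_i\cent{T}{Q_i}\subseteq\bigcap_i\norm{T}{S_i}=M,
\]
so $M\cent{T}{Q}=M$ and by \cite[Lemma~3.1]{NT2} the principal block $B_0(T)$ is the \emph{unique} block of $T$ covering $B_0(M)$. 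Hence $\mu_T\in B_0(T)$ automatically. This Alperin--Dade type argument is the missing key idea.
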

\begin{proof}
Let $\X$ be the orbit of $S$ that satisfies condition $(\star)$ and let $\X_i$ be the corresponding orbit for each $S_i$. We may assume $\cent G N=1$ because  $\cent G N\cap N=1$ and $\Irr(B_0(G/\cent G N))\sbs\Irr(B_0(G))$, so we may view $N\nor G\leq\Aut(N)\cong\Aut(S)\wr\mathsf{S}_n$. Define 
$$\mathcal{Y}=\{\alpha_1\times\dots\times\alpha_n\in\Irr(N)\mid\alpha_i\in \X_i\}$$
and notice that $\mathcal{Y}\sbs\Irr(B_0(N))$ by Lemma 2.6(ii) of \cite{NT3}.

Now $G$ acts on $\mathcal{Y}$ and since $|\mathcal{Y}|=|\mathcal{X}|^n$ is not divisible by $p$ there must be a $G$-orbit of size not divisible by $p$. Let $\lambda=\xi_1\times\dots\times\xi_n$ be a representative of this orbit, and set $K=\Aut(N)_\lambda\cong\Aut(S_1)_{\xi_1}\wr \mathsf{S}_n$. Notice that $K$ permutes the simple factors $S_1,\dots,S_n$ transitively.

We may view $S_1\cong S_1\cent K{S_1}/\cent K {S_1}\leq\norm{K}{S_1}/\cent K{S_1}\leq\Aut(S_1)_{\xi_1}$ because $K$ stabilizes $\lambda$. Let $\hat\xi_1$ be an extension of $\xi_1$ to $\Irr(B_0(\norm K{S_1}))$ with values in $\mathbb{K}$, which exists by Theorem A and Definition \ref{con:simples}(iii). By Corollary 10.5 of \cite{N3} (and its proof), the tensor induced character $\mu=(\hat\xi_1)^{\otimes K}$ extends $\lambda$. Moreover, by the tensor induction formula from Definition 2.1 of \cite{GI} we see that $\mathbb{Q}(\mu)\sbs\mathbb{Q}(\hat\xi_1)\sbs\mathbb{K}$.

Now let $$L=\bigcap_{i=1}^n\norm K{S_i}\nor K$$ and let $\{x_1,\dots, x_n\}$ be a $K$-transversal for $\norm K {S_1}$ so that $S_i=S_1^{x_i}$. We deduce from Lemma 10.4 of \cite{N3} that
$$\mu_L=(\hat\xi_1^{x_1})_L\cdots(\hat\xi_1^{x_n})_L$$
which is irreducible because $N\leq L$ and $\mu_N=\lambda$. 
%Now $\hat\xi_1\in\Irr(B_0(L))$ because $B_0(\norm K{S_1})$ covers $B_0(L)$ and using Theorem 9.2 of \cite{N1}, so every $\xi_1^{x_i}$ also lies in $B_0(L)$. It follows from Lemma 2.3 of \cite{GRSS} that $\mu_L\in\Irr(B_0(L))$.

Let $T=G_{\lambda}=G\cap K$. Our next goal is to show that $\mu_T\in\Irr(B_0(T))$.  Write $$M=\bigcap_{i=1}^t\norm T{S_i}=T\cap L\nor T$$ and notice that $N\leq M$. We have
$$S_1\cent K{S_1}/\cent K{S_1}\leq M\cent{K}{S_1}/\cent K{S_1}\leq L\cent{K}{S_1}/\cent{K}{S_1}\leq\norm K{S_1}/\cent{K}{S_1}$$
which implies by Theorem A that $(\hat\xi_1)_{M\cent{K}{S_1}}\in\Irr(B_0(M\cent{K}{S_1}/\cent{K}{S_1}))$. Now, via the natural isomorphism
$$M\cent{K}{S_1}/\cent{K}{S_1}\cong M/(M\cap \cent{K}{S_1})$$
we have that $(\hat\xi_1)_M\in\Irr(B_0(M))$ (using Problem 4.4 of \cite{N1}). Since $L\nor K$ we have $M^{x_i^{-1}}\leq L$ so $M^{x_i^{-1}}\leq \norm K {S_1}$, and arguing analogously we see that $(\hat\xi_1))_{M^{x_i^{-1}}}\in\Irr(B_0(M^{x_i^{-1}}))$ so $(\hat\xi_1^{x_i})_M\in\Irr(B_0(M))$. Then
$\mu_M=(\hat\xi_1^{x_1})_M\cdots(\hat\xi_1^{x_n})_M$ is irreducible (because $N\leq M$ and $\mu_N=\lambda$) and $\mu_M$
lies in $B_0(M)$ by Lemma 2.3 of \cite{GRSS}.

Let $Q\in\Syl_p(M)$ and $Q_i=Q\cap S_i\in\Syl_p(S_i)$. Then $$\cent T Q\sbs\bigcap_{i=1}^n{\cent T {Q_i}}\sbs\bigcap_{i=1}^n\norm T{S_i}=M.$$ We deduce from Lemma 3.1 of \cite{NT2} that $B_0(T)$ is the unique block covering $B_0(M)$. It follows that $\mu_T$ lies in $B_0(T)$, as desired.

By the Clifford correspondence, $\chi=(\mu_T)^G\in\Irr(G)$, and $\mathbb{Q}(\chi)\sbs\mathbb{Q}(\mu)\sbs\mathbb{K}$. Since $|G:T|$ is not divisible by $p$, $\chi$ has $p'$-degree. Finally, by Corollary 6.2 of \cite{N1}, $B_0(T)^G$ is defined and $\chi$ lies in $B_0(T)^G$ which is $B_0(G)$ by Brauer's third main theorem (Theorem 6.7 of \cite{N1}). This concludes the proof.
\end{proof}

\begin{hyp}\label{thm:simples}
Every nonabelian simple group of order divisible by $p$ satisfies condition $(\star)$ for $(\mathbb{Q}_p,p)$.
\end{hyp}

We denote by $\Irr_{p'}(B_0(G))$ the set of irreducible characters beloning to $B_0(G)$ whose degree is not divisible by $p$.

\begin{thm}\label{pro:psolvable}
Assume Hypothesis \ref{thm:simples}. Let $G$ be a finite group.
If every $\mathbb{Q}_p$-valued character $\chi\in\Irr_{p'}(B_0(G))$ is linear, then $G$ is $p$-solvable.
\end{thm}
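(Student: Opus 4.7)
The plan is to argue by induction on $|G|$, equivalently, to analyze a minimal counterexample $G$---a group of smallest possible order that satisfies the hypothesis but fails to be $p$-solvable. The first step is to observe that the hypothesis descends to every proper quotient $G/N$: since inflation embeds $\Irr(B_0(G/N))$ into $\Irr(B_0(G))$ preserving degrees, fields of values, and linearity, any $\mathbb{Q}_p$-valued character in $\Irr_{p'}(B_0(G/N))$ lifts to a $\mathbb{Q}_p$-valued character in $\Irr_{p'}(B_0(G))$, which must be linear by hypothesis, and is therefore linear on $G/N$ as well. Consequently every proper quotient of $G$ is $p$-solvable by minimality.

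Next, I would pick a minimal normal subgroup $N$ of $G$ and split into cases. If $N$ is abelian, then $N$ is elementary abelian (hence solvable), $G/N$ is $p$-solvable by minimality, and therefore $G$ is $p$-solvable, contradicting our choice of $G$. Otherwise $N \cong S_1 \times \cdots \times S_n$ with $S_i \cong S$ a nonabelian simple group. If $p \nmid |S|$, then $N$ is a normal $p'$-subgroup of $G$; combining this with the $p$-solvability of $G/N$ shows $G$ is $p$-solvable, again a contradiction. Hence $p \mid |S|$.

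At this point the situation is exactly the one handled by Proposition \ref{pro:going to G} with $\mathbb{K} = \mathbb{Q}_p$, since Hypothesis \ref{thm:simples} guarantees that $S$ satisfies condition $(\star)$ for $(\mathbb{Q}_p,p)$. Applying that proposition yields a nonlinear, $p'$-degree, $\mathbb{Q}_p$-valued character in $\Irr(B_0(G))$, contradicting the hypothesis on $G$ and closing the minimal counterexample argument. The substantive content is absorbed entirely into Proposition \ref{pro:going to G} and into the verification of Hypothesis \ref{thm:simples} for each family of simple groups; the reduction itself is a formal induction, and the only step requiring mild care is the inheritance of the hypothesis by $G/N$, which rests on the standard inflation correspondence between $B_0(G/N)$ and the part of $B_0(G)$ whose characters contain $N$ in their kernel.
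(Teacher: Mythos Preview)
Your argument is correct and follows essentially the same route as the paper's proof: induct on $|G|$, use $\Irr(B_0(G/N))\subseteq\Irr(B_0(G))$ to reduce to the case where a minimal normal subgroup $N$ is a direct product of copies of a nonabelian simple group of order divisible by $p$, and then invoke Proposition~\ref{pro:going to G}. The paper simply compresses your case analysis (abelian $N$, $p\nmid|S|$) into the phrase ``so we may assume,'' but the logic is the same.
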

\begin{proof}
We argue by induction on $|G|$. Let $N\nor G$ be a minimal normal subgroup. Since $\Irr(B_0(G/N))\sbs\Irr(B_0(G))$, by induction $G/N$ is $p$-solvable so we may assume that $N\cong S^t$ for some nonabelian simple group $S$ of order divisible by $p$. By   Proposition \ref{pro:going to G} there is a nonlinear $\Q_p$-valued $\psi\in\Irr_{p'}(B_0(G))$ and we are done.
\end{proof}

Of course, the ``if'' direction of Theorems D and E is trivial.
The following is the nontrivial direction of Theorem E.

\begin{cor}\label{only-if}
Let $p$ be any prime, and assume Hypothesis \ref{thm:simples}. Suppose that every $\mathbb{Q}_p$-valued character $\chi\in\Irr_{p'}(B_0(G))$ is linear. Then $G$ has a normal $p$-complement.
\end{cor}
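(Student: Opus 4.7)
The plan is to combine Theorem~\ref{pro:psolvable} with the main results of \cite{NT1}. By Theorem~\ref{pro:psolvable} the hypothesis already forces $G$ to be $p$-solvable, so I would proceed by induction on $|G|$. The aim is to reduce to the case $\mathbf{O}_{p'}(G)=1$ and then use a classical block-theoretic fact to promote the hypothesis on $B_0(G)$ to a hypothesis on all of $\Irr(G)$, after which the theorems of \cite{NT1} finish the argument.

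For the reduction, set $M=\mathbf{O}_{p'}(G)$. Since $M$ is a normal $p'$-subgroup, Brauer's third main theorem identifies $\Irr(B_0(G/M))$ (via inflation) with $\{\chi\in\Irr(B_0(G)):M\leq\ker\chi\}$, and inflation preserves degrees, character values, and the property of being nonlinear. So our hypothesis on $G$ descends to $G/M$. If $M\neq 1$, induction would give $G/M$ a normal $p$-complement $L/M$; since both $M$ and $L/M$ are $p'$-groups, $L$ is itself a $p'$-group with $G/L$ a $p$-group, so $L$ is a normal $p$-complement of $G$. One may therefore assume $M=\mathbf{O}_{p'}(G)=1$.

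The crucial step is then Fong's classical description of the principal block for $p$-solvable groups: $\Irr(B_0(G))=\Irr(G/\mathbf{O}_{p'}(G))$ via inflation. With $\mathbf{O}_{p'}(G)=1$, this yields $\Irr(B_0(G))=\Irr(G)$, and the hypothesis now says that every nonlinear $\mathbb{Q}_p$-valued character of $G$ has $p$-divisible degree. For $p=2$, Theorem~A of \cite{NT1} then concludes that $G$ has a normal $2$-complement, and for odd $p$ the same conclusion follows from Theorem~C of \cite{NT1}.

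The main (and essentially only) step beyond direct citation from the present paper is the appeal to Fong's theorem on principal blocks of $p$-solvable groups; this is where the $p$-solvability provided by Theorem~\ref{pro:psolvable} is genuinely used, and once it is invoked the corollary drops out immediately.
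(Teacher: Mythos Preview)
Your proof is correct and follows essentially the same route as the paper's: use Theorem~\ref{pro:psolvable} to obtain $p$-solvability, reduce by induction to $\mathbf{O}_{p'}(G)=1$, invoke Fong's identification $\Irr(B_0(G))=\Irr(G/\mathbf{O}_{p'}(G))$ (the paper cites \cite[Theorem~10.20]{N1}), and then apply the non-block result (the paper cites \cite[Theorem~2.4]{NT3} rather than \cite{NT1}, but these play the same role). One minor quibble: the inflation fact you use in the reduction step is not Brauer's third main theorem per se but rather the standard containment $\Irr(B_0(G/M))\subseteq\Irr(B_0(G))$ for normal $M$.
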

\begin{proof}
We argue by induction on $|G|$. By Theorem \ref{pro:psolvable} we have that $G$ is $p$-solvable, so $\Irr(B_0(G))=\Irr(G/\oh{p'}G)$,
using \cite[Theorem 10.20]{N1}. Thus we may assume $\oh{p'}G=1$ by induction, and then $G$ has a unique block so we can apply \cite[Theorem 2.4]{NT3}.
\end{proof}

\begin{proof}[Proof of Theorem D]
By the preceding discussion and Corollary \ref{only-if}, it suffices to show that Hypothesis \ref{thm:simples} holds for $p=2$. Let $S$ be any finite non-abelian
simple group. By \cite[Theorem 3.3]{NT1}, $\Irr(S)$ contains a set $\mathcal{X}$ (of one or three characters) that satisfies all the conditions set in Hypothesis \ref{thm:simples}, except possibly for the requirement that $\hat\theta$ belongs to $B_0(\Aut(S)_\theta)$. But note that $\hat\theta$ is rational-valued and
of odd degree. On the other hand, by Fong's lemma, any nontrivial real-valued irreducible $2$-Brauer character of any finite group has even dimension.
It follows that the principal $2$-Brauer character must be a constituent of the restriction of $\hat\theta$ to $2'$-elements, and so 
$\hat\theta \in  B_0(\Aut(S)_\theta)$.
\end{proof}

Our next results prove Hypothesis \ref{thm:simples}, except for $S$ a simple Lie-type group in characteristic $\neq p$.

\begin{pro}\label{defi}
Let $p$ be any odd prime and let $S$ be a simple group of Lie type in characteristic $p$. Then $S$ satisfies condition $(\star)$ of \ref{con:simples}
for $(\QQ_p,p)$.
\end{pro}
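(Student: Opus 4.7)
The plan is to identify a specific $\Aut(S)$-invariant, nonlinear, $\Q_p$-valued character $\chi$ in $B_0(S)$ of $p'$-degree, constructed via Deligne--Lusztig theory, and then to verify that $\chi$ extends to a $\Q_p$-valued character in $B_0(\Aut(S))$. The principal-block condition on $\chi$ itself will follow from the structure of $p$-blocks of finite reductive groups in defining characteristic (due to Humphreys and others): writing $G = G_{\mathrm{sc}}^F$ for a simply connected cover of $S$ over $\mathbb{F}_q$ with $q$ a power of $p$, every irreducible character of $G^F$ that is not of $p$-defect zero lies in the principal $p$-block relative to the appropriate central character. Passing to $S = G^F/Z(G^F)$, any $p'$-degree irreducible character of $S$ automatically lies in $B_0(S)$.

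For the character itself, I take $\chi = \chi_s$ to be a Deligne--Lusztig semisimple character associated to a well-chosen semisimple $p'$-element $s$ in the dual group $(G^*)^{F^*}$. The element $s$ is chosen so that: (a) its conjugacy class is $\Aut(S)$-stable and stable under $\Gal(\overline{\Q}/\Q)$, making $\chi_s$ both $\Aut(S)$-invariant and $\Q$-valued; (b) $s$ lies in $[G^*,G^*]$, so that $\chi_s$ descends to a character of $S$; and (c) $s$ is non-central, so that $\chi_s$ is nonlinear. Since $p$ is odd, for classical types an explicit rational involution such as $\diag(-I_k, I_{n-k})$ in the appropriate matrix group works well; for exceptional types the finitely many cases are handled individually. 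With these choices, the singleton $\X = \{\chi_s\}$ is an $\Aut(S)$-orbit of $p'$-size, satisfying conditions (i) and (ii) of $(\star)$.

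The final and most delicate step is condition (iii): $\chi$ must extend to a $\Q_p$-valued character lying in $B_0(\Aut(S))$. Existence of an extension of $\chi$ as a complex character follows from known equivariance results for semisimple characters under outer automorphisms (e.g.\ work of Malle and of Cabanes--Sp\"ath), and Gallagher's theorem allows one to twist this extension by a linear character of $\Aut(S)/S$, making it $\Q$-valued by virtue of the rationality of $s$. That the extension then lies in $B_0(\Aut(S))$ follows from Theorem A applied to the chain between $S$ and $\Aut(S)$. The principal obstacle is to ensure the extension is simultaneously $\Q_p$-valued \emph{and} in the principal block, particularly when $\Out(S)$ contains field automorphisms of $p$-power order (i.e.\ when $p$ divides the field-extension degree $a$ for $q = p^a$), where Galois action and field-automorphism action can interact nontrivially. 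This likely requires a uniform construction via Harish-Chandra induction or Shintani descent, together with explicit case analysis for the small exceptional cases such as $\PSL_2(q)$, where the descent of $s$ to $S$ is especially constrained.
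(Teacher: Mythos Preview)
Your construction of the character $\chi_s$ via Deligne--Lusztig theory is essentially what \cite[Theorem 3.3]{NT1} already carries out, and the paper simply cites that result to obtain the set $\mathcal{X}$ together with the $\Q_p$-valued extension $\hat\theta$ to $\Aut(S)_\theta$. So on conditions (i) and (ii), and on the existence and rationality of the extension, your sketch is along the right lines, just more laborious than necessary.

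The genuine gap is in your argument for condition (iii). You write that ``the extension then lies in $B_0(\Aut(S))$ follows from Theorem A applied to the chain between $S$ and $\Aut(S)$.'' But Theorem A goes in the \emph{opposite} direction: it takes a character already known to lie in $B_0(G)$ and pushes it \emph{down} to $B_0(H)$ for $S \le H \le G$. It cannot be used to lift $\theta \in B_0(S)$ up to conclude that an extension $\hat\theta$ lies in $B_0(\Aut(S)_\theta)$. Indeed, the paper explicitly flags the absence of a ``going up'' version of Theorem A as the main remaining obstacle for odd $p$ in general.

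What rescues the defining-characteristic case is a fact specific to that setting: by \cite[Lemma 6.3]{BLP}, $B_0(\Aut(S)_\theta)$ is the \emph{unique} $p$-block of $\Aut(S)_\theta$ covering $B_0(S)$. Since $\theta$ is not the Steinberg character (having $p'$-degree larger than $1$), it lies in $B_0(S)$, and then any extension $\hat\theta$ lies in a block covering $B_0(S)$, hence in $B_0(\Aut(S)_\theta)$. This uniqueness-of-covering-block argument is the missing ingredient in your proposal, and it replaces your appeal to Theorem A entirely. Your final paragraph about Shintani descent and field automorphisms is therefore addressing a difficulty that does not actually arise here.
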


\begin{proof}
By \cite[Theorem 3.3]{NT1}, $\Irr(S)$ contains a set $\mathcal{X}$ that satisfies all the requirements set in condition $(\star)$ of \ref{con:simples} 
for $(\QQ_p,p)$, except possibly for the one stipulating that $\hat\theta$ belongs to $B_0(\Aut(S)_\theta)$ for any $\theta \in \mathcal{X}$. Note that
any such $\theta$ is not the Steinberg character of $S$, hence it belongs to $B_0(S)$. By \cite[Lemma 6.3]{BLP},  $B_0(\Aut(S)_\theta)$ is the only 
$p$-block that covers $B_0(S)$, and so we are done.
\end{proof}

\begin{pro}\label{alt}
Let $p$ be any odd prime and let $S = \AAA_n$ with $n \geq 5$. Then $S$ satisfies condition $(\star)$ of \ref{con:simples}
for $(\QQ,p)$.
\end{pro}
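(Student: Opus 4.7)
The plan is as follows. By \cite[Theorem 3.3]{NT1}, $\Irr(\AAA_n)$ contains a set $\mathcal{X}$ of one or three rational-valued characters satisfying conditions (i), (ii), and the extension part of (iii) of condition $(\star)$ for $(\mathbb{Q}, p)$; what remains is the block condition. Since each $\theta \in \mathcal{X}$ can be taken of the form $\chi_\lambda|_{\AAA_n}$ for some non-self-conjugate partition $\lambda$ of $n$, and since $[\SSS_n:\AAA_n]=2$ is coprime to the odd prime $p$, the block $B_0(\SSS_n)$ covers only $B_0(\AAA_n)$. It therefore suffices to exhibit, for each $n \geq 5$ and odd prime $p \leq n$, a non-self-conjugate partition $\lambda \vdash n$ with $\chi_\lambda \in \Irr(B_0(\SSS_n))$, $\chi_\lambda(1)>1$, and $p \nmid \chi_\lambda(1)$. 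Setting $\theta := \chi_\lambda|_{\AAA_n}$ then gives all the required properties; for $n \neq 6$, we have $\Aut(\AAA_n) = \SSS_n$, and since $\theta$ extends to the $\SSS_n$-character $\chi_\lambda$ it is fixed by $\SSS_n$, so $\mathcal{X} = \{\theta\}$ is an orbit of size $1$ and $\hat\theta := \chi_\lambda \in \Irr(B_0(\SSS_n))$ is the required rational extension.

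To construct $\lambda$, I split into cases. If $p \mid n$, I take $\lambda = (n-1, 1)$: the hook-length formula gives $\chi_\lambda(1) = n-1 \not\equiv 0 \pmod p$; a $p$-abacus computation shows $\lambda$ has empty $p$-core, matching that of $(n)$, so $\chi_\lambda \in B_0(\SSS_n)$; and $\lambda \neq \lambda' = (2, 1^{n-2})$ for $n \geq 4$. If $p \nmid n$, I write $n = pw + r$ with $1 \leq r < p$ and propose $\lambda = (p(w-1)+1,\, r+1,\, 1^{p-2})$ when $w \geq 2$, $\lambda = (r, 2, 1^{p-2})$ when $w = 1$ and $r \geq 2$, and $\lambda = (2, 2, 1^{p-3})$ when $w = r = 1$ (the sub-case $p = 3$, $w = r = 1$ gives $n = 4$ and lies outside the scope). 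In each sub-case the $p$-abacus computation shows the $p$-core of $\lambda$ is $(r)$, placing $\chi_\lambda$ in $B_0(\SSS_n)$; comparison with the transpose shows $\lambda$ is non-self-conjugate; and the hook-length formula is used to verify that $\chi_\lambda(1)$ is coprime to $p$ and exceeds $1$.

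The exceptional case $n = 6$ is treated separately, with $\Aut(\AAA_6) = \SSS_6{.}2$ and only $p \in \{3, 5\}$ relevant: direct inspection (or \cite{GAP}) exhibits suitable characters, e.g.\ $\chi_{(5,1)}|_{\AAA_6}$ of degree $5$ for $p = 3$ and $\chi_{(4,2)}|_{\AAA_6}$ of degree $9$ for $p = 5$, together with rational extensions in the principal block of $\Aut(\AAA_6)_\theta$. The main obstacle is the uniform verification of the $p'$-degree condition in the generic case $p \nmid n$: although the candidates above succeed for typical $(w, r)$, in ``bad'' sub-cases where $v_p(w) > 0$ introduces extra $p$-factors in the hook lengths at $(1,1)$ and $(2,1)$, one must pass to a different partition in $B_0(\SSS_n)$, selected via the James--Kerber formula for $\chi_\lambda(1)$ in terms of the $p$-quotient of $\lambda$ (for instance, by replacing the single-row $p$-quotient component $(w)$ with one of shape $(1^w)$ or splitting mass across runners).
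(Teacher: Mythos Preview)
Your overall framework matches the paper's: reduce to exhibiting a non-self-conjugate $\lambda \vdash n$ with $\chi_\lambda \in \Irr_{p'}(B_0(\SSS_n))$ and $\chi_\lambda(1)>1$, then restrict. The case $p \mid n$ (with $\lambda=(n-1,1)$) and the treatment of $n=6$ agree with the paper.

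The gap is in your case $p \nmid n$, and it is not quite where you locate it. For $w \geq 2$ your candidate $\lambda=(p(w-1)+1,\,r+1,\,1^{p-2})$ actually \emph{does} have $p'$-degree whenever $r \geq 2$: a direct hook-length count gives $v_p(\chi_\lambda(1))=v_p(w!)-v_p(w)-v_p((w-1)!)=0$. The failure occurs only when $r=1$, where the same computation yields $v_p(\chi_\lambda(1))=v_p(w-1)$; thus your partition fails precisely when $n \equiv p+1 \pmod{p^2}$ (e.g.\ $p=3$, $n=13$ gives $\lambda=(10,2,1)$ with $\chi_\lambda(1)=429=3\cdot 11\cdot 13$). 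So your diagnosis ``$v_p(w)>0$'' is off, and the proposed repair via the $p$-quotient is left as a sketch rather than carried out.

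The paper avoids this entirely by a different case split. It isolates $n \equiv 1 \pmod p$ and takes $\lambda=(n-2,2)$, with $\chi_\lambda(1)=n(n-3)/2$ coprime to $p$ since $p\nmid n$ and $p\nmid(n-3)$. For $2\leq r<p$ it uses the hook partition $\lambda=(n-p^s,1^{p^s})$, where $p^s$ is the least power of $p$ in the $p$-adic expansion $n=r+n_sp^s+\cdots$; then $\chi_\lambda(1)=\binom{n-1}{p^s}$ is coprime to $p$ by Lucas' theorem, and $\lambda$ is not self-associate because $p\nmid(n-1)$. This gives a uniform, computation-free verification of the $p'$-degree condition, which is exactly the step your proposal leaves open.
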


\begin{proof}
First we assume $n \neq 6$, so that $A:=\Aut(S) \cong \SSS_n$. We will show that there is a character $\chi \in \Irr(A)$ such 
that $\mathcal{X} = \{\theta=\chi_S\}$ satisfies condition $(\star)$ of \ref{con:simples}
for $(\QQ,p)$. We will choose $\chi=\chi^\lambda$, the character of $A$ labeled by a suitable partition $\lambda \vdash n$, which has 
degree coprime to $p$ and larger than $1$, has the same $p$-core as of the partition $(n)$, and which is not self-associate.
The $p$-core condition implies that $\chi \in B_0(A)$, and the last condition guarantees that
$\chi$ restricts irreducibly to $S$, whence $\chi_S \in B_0(S)$ by Theorem A. Now, if $p|n$, then we choose $\lambda=(n-1,1)$, so that $\chi(1)=n-1$. 
If $n \equiv 1 \pmod{p}$, then we choose $\lambda=(n-2,2)$, so that $\chi(1)=n(n-3)/2$. Suppose now that $p \nmid n(n-1)$, whence
$n \equiv r \pmod{p}$ with $2 \leq r < p$. Let 
$$n= r + n_sp^s + n_{s+1}p^{s+1} + \ldots$$
be the $p$-adic decomposition of $n$, where $s \geq 1$ and $n_s \geq 1$. Then  
$$n-1= (r-1) + n_sp^s + n_{s+1}p^{s+1} + \ldots$$
is the $p$-adic decomposition of $n-1$; in particular, $p \nmid \binom{n-1}{p^s}$ by \cite[Lemma 22.4]{J}. Now we choose 
$\lambda$ to be the hook partition $(n-p^s,1^{p^s})$, so that $\chi(1) = \binom{n-1}{p^s}$, and note that $\lambda$ is not self-associate since
$p \nmid (n-1)$.

Assume now that $n=6$. If $p=3$, we can take $\theta$ to be the restriction to $S$ of the character $\chi^{(5,1)}$ of $\SSS_6$ as before (and note that
$\Aut(S)_\theta = \SSS_6$). If $p=5$, take $\theta$ to be of degree $9$, which has two extensions to $\Aut(S)$ that lie in the principal block.
\end{proof}

\section{Conjecture F}

If $\chi \in \irr G$, recall that $\Q(\chi)$ is the smallest subfield of the complex numbers containing the values $\chi(g)$ 
for $g \in G$, and that $c(\chi)$, the conductor of $\chi$,
 is the smallest positive integer $n$ such that $\Q(\chi)$ is contained in the $n$-th cyclotomic field $\Q_n$. If $n$ is an integer, then $n_p$ is the largest power of $p$ dividing $n$.
We say that $\chi \in \irr G$ is {\sl $p$-rational} if $c(\chi)_p=1$, and $\chi$ is {\sl almost $p$-rational} if $c(\chi)_p \le p$.  We start by providing an alternative proof of Proposition 2.6 of 
\cite{BP} from which we can extend a useful second part. (We thank C. Vallejo for pointing out this result to us.) Recall that the so called Galois--Alperin--McKay implies that every
$p$-block $B$ has an irreducible character  $\chi \in \irr B$ with $c(\chi)$ not divisible by $p$, but this fact has not been proved yet.

\medskip
\begin{thm}\label{p=2}
Let $B$ be a $p$-block of a finite group $G$. Then there exists  $\chi \in \irr B$ of height zero such that $c(\chi)_p\le p$.
Furthermore,  $\chi$ is unique if and only if $B$ has defect zero.
\end{thm}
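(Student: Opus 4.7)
The plan is to combine a Galois-theoretic fixed-point argument with a reduction to the Brauer correspondent. Let $U = \Gal(\QQ_{|G|}/\QQ_{p\cdot|G|_{p'}})$; this is a $p$-group (it embeds into $\Gal(\QQ_{|G|_p}/\QQ_p)$), the $U$-fixed characters in $\Irr(G)$ are precisely the almost $p$-rational ones, and $U$ fixes each $p$-block $B$ setwise, since its elements act trivially on all $p'$-roots of unity and therefore preserve every central character modulo $p$. Consequently $U$ acts on the subset $\Irr_0(B)$ of height-zero characters in $B$, and the theorem reduces to counting $|\Irr_0(B)^U|$: we must show this is nonempty in general, and that it equals one precisely when $B$ has defect zero.

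For the existence part, the orbit identity $|\Irr_0(B)^U|\equiv |\Irr_0(B)|\pmod{p}$ already yields a fixed point whenever $p\nmid |\Irr_0(B)|$. In the general case I would reproduce the content of Brou\'e--Puig's original argument by passing to the Brauer correspondent block $b$ of $N_G(D)$, where $D$ is a defect group of $B$. Inside $N_G(D)$ the defect group is normal, so Dade's theory of blocks with normal defect group writes $\Irr_0(b)$ explicitly via Clifford theory over the canonical character of $D\,C_G(D)$, and from this description one can exhibit an almost $p$-rational height-zero member by hand. Galois-compatibility of central characters then transports that information back to $B$.

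For the uniqueness assertion, the ``if'' direction is immediate: defect zero forces $|\Irr(B)|=1$ (via $k(B)=l(B)+\#\{p\text{-singular classes in }B\}$), so the existence proved above provides the unique required character. For the ``only if'' direction, assume $B$ has positive defect and suppose for contradiction that $\chi_0$ is the unique almost $p$-rational height-zero character in $B$. Then all remaining height-zero characters split into $U$-orbits of size divisible by $p$, so $|\Irr_0(B)|\equiv 1\pmod{p}$; combined with the known lower bound $|\Irr_0(B)|\geq 2$ for positive-defect blocks, this forces $|\Irr_0(B)|\geq p+1$. Passing once more to the Brauer correspondent $b$ in $N_G(D)$ and using the explicit local description of $\Irr_0(b)$ in the normal-defect-group case, I would produce a second $U$-fixed element of $\Irr_0(b)$ and transport it back to $\Irr_0(B)$, contradicting uniqueness.

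The main obstacle is that no Galois-equivariant Alperin--McKay bijection is available in general, so one cannot simply identify $\Irr_0(B)$ with $\Irr_0(b)$ and read both counts off the local side. The substantive task is to transport only the coarse $U$-invariant data (the orbit decomposition and fixed-point count) between $B$ and $b$; this is presumably what Brou\'e--Puig achieved in their existence proof, and what must be sharpened here to separate the defect-zero case from every block of positive defect.
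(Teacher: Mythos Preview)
Your setup with the Galois $p$-group $U=\Gal(\QQ_{|G|}/\QQ_{p|G|_{p'}})$ acting on $\Irr_0(B)$ is exactly right, and your instinct to pass to the Brauer correspondent $b$ in $N_G(D)$ is also correct. But both halves of your argument have a real gap at the ``transport'' step, and the paper avoids this gap by a device you are missing.

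\textbf{Existence.} You propose to exhibit an almost $p$-rational height-zero character in $b$ and then ``transport that information back to $B$'' via Galois-compatibility of central characters. That last phrase does not name an actual mechanism: knowing that $U$ preserves $B$ and $b$ separately does not let you move a $U$-fixed character from $\Irr_0(b)$ to $\Irr_0(B)$. The paper's route is sharper and more concrete. One first builds, via Clifford theory over the canonical character, a height-zero $\xi\in\Irr(b)$ that is genuinely \emph{$p$-rational} (not merely almost $p$-rational). Then one induces: the $B$-part of $\xi^G$ has $p$-part of its degree exactly $|G:D|_p$, so the integer
\[
w=\sum_{\chi\in\Irr(B)}[\xi^G,\chi]\,p^{h_\chi}m_\chi
\]
is coprime to $p$. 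Because $\xi$ is $U$-fixed, the multiplicities $[\xi^G,\chi]$ are constant on $U$-orbits, so $w$ is a sum over $U$-orbits weighted by orbit size; coprimality to $p$ forces a singleton orbit with $h_\chi=0$. This is the transport: it goes through ordinary induction and a divisibility count, not through any bijection.

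\textbf{Uniqueness.} Here your plan genuinely fails. You want to produce a second $U$-fixed point in $\Irr_0(b)$ and then move it back to $\Irr_0(B)$, but you yourself note that no Galois-equivariant Alperin--McKay map is available, and the induction trick above only guarantees \emph{at least one} fixed point, not two. The paper sidesteps the local picture entirely for this direction: writing $|G|_p=p^a$, $|D|=p^d$, and $\chi(1)=p^{a-d+h_\chi}m_\chi$, the block dimension formula $\sum_{\chi\in\Irr(B)}\chi(1)^2=p^{2a-d}f$ with $p\nmid f$ gives
\[
p^d f \equiv \sum_{\chi\in\Irr_{0}(B)^U} m_\chi^2 \pmod{p},
\]
since positive-height terms and non-fixed height-zero orbits both contribute multiples of $p$. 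If the fixed set is a singleton $\{\chi\}$, the right side is $m_\chi^2\not\equiv 0$, forcing $d=0$. This is an elementary global counting argument and requires no second pass to the Brauer correspondent.
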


\begin{proof}
Let $D$ be a defect group of $B$. Let $H=\norm GD$,
and by Brauer's First Main Theorem (4.14 of \cite{N1}), let $b$ be the unique block of $H$ with defect group $D$ inducing $B$. Let $C=\cent GD$  and let $\theta \in \irr{CD/D}$ be a canonical character of $b$
(see page 203 in \cite{N1}.)
Let $\tau \in \irr{H_\theta|\theta}$, where $H_\theta$ is the stabilizer
of $\theta$ in $H$. Recall that $H_\theta/DC$ is a $p'$-group by Theorem 9.22 of \cite{N1}. Then $\tau \in \irr{H_\theta/D}$ has
$p$-defect zero, and therefore, it is $p$-rational by Theorem 3.18 of \cite{N1}. Hence, $\xi=\tau^H \in \irr b$ is $p$-rational and has height zero since

$$\xi(1)_p=|H:H_\theta|_p\tau(1)_p=|H:H_\theta|_p|CD:D|_p=|H:D|_p.$$

By Corollary 6.4 of \cite{N1}, we have that
$$\left(\sum_{\chi \in \irr B}[\xi^G, \chi]\chi(1) \right)_p=\xi^G (1)_p=|G:H|_p|H:D|_p=|G:D|_p\, .$$
For $\chi \in \irr B$, write $\chi(1)=|G:D|_p p^{h_\chi} m_\chi$, where $m_\chi$ is not divisible by $p$, and $h_\chi \ge 0$.
Hence, we have that 
$$w= \sum_{\chi \in \irr B}[\xi^G, \chi]p^{h_\chi}m_\chi $$
is not divisible by $p$. Let $m=|G|_{p'}$, and let $A={\rm Gal}(\Q_{|G|}/\Q_{pm})$, a $p$-group. Since $A$ fixes 
the $p'$-roots of unity, notice that $A$ acts on $\irr B$.
Since $\chi^\sigma(1)=\chi(1)$ and $[\xi^G, \chi]=[\xi^G, \chi^\sigma]$ (because $\xi$ is $p$-rational), we have that
$$w=\sum_{i=1}^s |A:A_{\chi_i}|[\xi^G, \chi_i] p^{h_{\chi_i}}m_{\chi_i} \, ,$$
where $\chi_i$ are representatives of the action of $A$ on $\irr B$. 
We conclude that there exists $\chi_i \in \irr B$ of height zero which is $A$-invariant, 
and this proves the first part.

For the second part, let $e=\sum_{\chi \in \irr B} \chi(1)^2$ and write ${\rm Irr}_{0,A}(B)$ for the $A$-invariant height zero characters in $B$.
By Theorem 3.28 of \cite{N1}, we can write
$e=p^{2a-d}f$, where $p$ does not divide $f$, $|G|_p=p^a$ and $|D|=p^d$.  For each $\chi \in \irr B$, recall that
$\chi(1)=p^{a-d+h_\chi}m_\chi$, where $m_\chi$ is not divisible by $p$. Then
$$p^{2a-d} f=\sum_{\chi \in \irr B} p^{2h_\chi+ 2a -2d} m_\chi^2 \, .$$
Hence
$$p^d f= \sum_{\chi \in \irr B}p^{2h_\chi}m_\chi^2 \equiv \sum_{\chi \in {\rm Irr}_{0,A}(B)} m_\chi^2\,  \, {\rm mod} \, p \,.$$

%since there is $\chi$ $A$-invariant, $B$ is $A$-invariant, so we can partition into orbits as above. The ones not fixed by $A$ give an orbit of size power of $p$.

If ${\rm Irr}_{0,A}(B)=\{\chi\}$, then we have that 
$$p^d f \equiv m_{\chi}^2\,  {\rm mod} \, p \,.$$
Since $m_\chi$ is not divisible by $p$, we obtain that $d=1$, as required.
\end{proof}

At least we can show Conjecture F for $p$-blocks with a normal defect group.

\begin{thm}
Let $G$ be a finite group, and let $B$ be a $p$-block of $G$ with defect group $D\nor G$.
If all the height zero almost $p$-rational characters in $B$ have the same degree, then $B$ is nilpotent.
\end{thm}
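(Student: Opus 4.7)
The plan is to combine the structure theory of blocks with normal defect groups with the existence result in Theorem~\ref{p=2}. Set $C=DC_G(D)$. Since $D\nor G$ we have $\norm{G}{D}=G$. Let $b$ be a block of $C$ covered by $B$ and let $T:=G_b$ be its $G$-stabilizer. By Fong--Reynolds at the block level, the unique block $B^\sharp$ of $T$ with $(B^\sharp)^G=B$ satisfies: $B^\sharp$ is nilpotent iff $B$ is, and induction yields a height-preserving bijection $\Irr(B^\sharp)\to\Irr(B)$. Passing to $(T,B^\sharp)$, I would arrange that $b$ is $G$-invariant; then $E:=G/C$ is a $p'$-group (inertia quotients of blocks are $p'$), and $B$ is nilpotent if and only if $E=1$. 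A first delicate point is to check the hypothesis transfers cleanly: since $|G:T|$ is coprime to $p$, induction from $T$ to $G$ does not increase the $p$-part of the conductor, so almost $p$-rationality descends to $B^\sharp$ with the degrees divided by $|G:T|$.

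Next, let $\theta\in\Irr(b)$ be a canonical character; because $b$ is $G$-invariant, so is $\theta$. By the Dade--Reynolds theorem for blocks with normal defect group, the characters in $\Irr(B\mid\theta)$ correspond bijectively with the $\alpha$-projective irreducible characters $\psi$ of $E$ for a canonical cocycle $\alpha\in H^2(E,\CC^\times)$, with $\chi(1)=\theta(1)\psi(1)$. Since $E$ is a $p'$-group, all such $\chi$ have the same $p$-part of conductor as $\theta$: multiplying or dividing by a character of $p'$-conductor does not alter the $p$-part. Theorem~\ref{p=2} furnishes at least one height-zero almost $p$-rational character in $B$, which then forces $c(\theta)_p\leq p$, so \emph{every} $\chi\in\Irr(B\mid\theta)$ is almost $p$-rational of height zero. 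The hypothesis then forces all $\psi(1)$ to coincide; equivalently, all $\alpha$-projective irreducible characters of $E$ have a common degree.

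The final and most delicate step is to infer $E=1$, since the common-degree condition alone only gives $E$ abelian (in the trivial-cocycle case), which is weaker. The idea is to bring in the characters of $B$ lying \emph{outside} the $G$-orbit of $\theta$: any other orbit $\mathcal{O}\subseteq\Irr(b)$ with representative $\theta'$ of length $m:=|G:G_{\theta'}|>1$ contributes characters of $B$ of degree $m\cdot\theta'(1)\cdot\psi'(1)$ via Clifford induction, hence differing from $\theta(1)$ by at least the factor $m$. Provided such characters are themselves almost $p$-rational of height zero, the same-degree hypothesis is immediately violated. The main obstacle lies precisely in verifying this almost $p$-rationality: one must track fields of values through the Clifford correspondence over $\mathcal{O}$, exploiting that $E$ acts on $\Irr(b)$ by $p'$-conjugations (hence preserves $p$-parts of conductors), and apply Theorem~\ref{p=2} to a related smaller block. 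In the edge case where $\Irr(b)=\{\theta\}$, so no such $\mathcal{O}$ exists, the common-degree condition on the $\alpha$-projective characters of the $p'$-group $E$ must instead be combined with an analysis of the twisted group algebra $\CC_\alpha E$—e.g., via an It\^o--Michler-type argument for $p'$-groups—to force $E=1$ directly.
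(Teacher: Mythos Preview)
Your outline has a genuine gap at the final step, and the field-of-values bookkeeping before it is also shakier than you indicate.

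First, the Dade--Reynolds part. The correspondence $\Irr(B\mid\theta)\leftrightarrow \Irr_\alpha(E)$ is not given by tensoring with a character of $E$, so the assertion that ``multiplying or dividing by a character of $p'$-conductor does not alter the $p$-part'' does not obviously apply; you have not shown that every $\chi\in\Irr(B\mid\theta)$ satisfies $c(\chi)_p=c(\theta)_p$. Moreover, the almost $p$-rational height-zero character supplied by Theorem~\ref{p=2} need not lie over $\theta$ at all---it could lie over some other $\theta'\in\Irr(b)$---so you cannot immediately conclude $c(\theta)_p\le p$.

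Second, and more seriously, even granting that all $\alpha$-projective irreducible characters of the $p'$-group $E$ have a common degree, this does not force $E=1$; you acknowledge this yourself. Your proposed remedy via ``other orbits'' $\mathcal O\subseteq\Irr(b)$ of length $>1$ is not available in general: after the Fong--Reynolds reduction $b$ is $G$-invariant, but $E$ may well fix every character in $\Irr(b)$ (for instance when $E$ acts trivially on $\Irr(C/D)$ yet nontrivially on $D$), so no orbit of length $>1$ exists. The ``edge case'' argument via the twisted algebra $\CC_\alpha E$ is only a hope, not an argument: a nontrivial $p'$-group can certainly have all $\alpha$-projective irreducibles of a common degree, and no It\^o--Michler statement rules this out.

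The paper's proof takes a completely different and much shorter route that sidesteps all of this. One passes to $\bar G=G/\Phi(D)$ and the dominated block $\bar B$ with elementary abelian defect group $D/\Phi(D)$. The key observation is that \emph{every} $\chi\in\Irr(\bar B)$ is automatically almost $p$-rational: by Brauer's second main theorem $\chi(g)=0$ unless $g_p\in D/\Phi(D)$, and such $g_p$ has order at most $p$, so $\QQ(\chi)\subseteq\QQ_{pm}$ with $m=|G|_{p'}$. Hence the hypothesis becomes simply that all height-zero characters of $\bar B$ have equal degree, and Okuyama--Tsushima \cite{OT} gives that $\bar B$ is nilpotent. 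One then lifts nilpotency back to $B$ by following the proof of \cite[Theorem~5.2]{MN1}. This reduction to an elementary abelian defect group is the idea your argument is missing.
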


\begin{proof}
Let $N=\Phi(D)$. By Theorem 9.9 of \cite{N1}, $B$ contains a $p$-block $\bar B$ of $\bar G=G/N$
with defect group $D/N$. Let $\chi \in \irr{\bar B}$ and let $g \in \bar G$. If $g_p \not\in D/N$, then
$\chi(g)=0$, by the second main theorem. It follows that $\Q(\chi) \sbs \Q_{pm}$, where $m=|G|_{p'}$.
Hence, we have that all the irreducible characters of $\bar B$ are almost $p$-rational. It then follows that
$\bar B$ is nilpotent by Okuyama--Tsushima \cite{OT}. Now follow the proof of Theorem 5.2 of \cite{MN1}.
\end{proof}

\end{document}